\definecolor{blauw}{RGB}{61,158,255}
\definecolor{donkerblauw}{RGB}{0,0,255}
\definecolor{donkergroen}{RGB}{46,148,0}
\definecolor{donkerrood}{RGB}{204,0,0}
\newcommand\mynobreakpar{\par\nobreak\@afterheading} 
\let\@fnsymbol\@arabic
\newtheorem{conjecture}{Conjecture}[section]
\newtheorem{theorem}{Theorem}[section]
\newtheorem{lemma}[theorem]{Lemma}
\newtheorem{corollary}[theorem]{Corollary}
\theoremstyle{definition}
\newtheorem{examp}{Example}[section]
\newtheorem*{examp*}{Example}
\newcommand{\Forb}{\textrm{Forb}}
\newcommand{\FF}{\mathcal{F}}
\newcommand{\LL}{\mathcal{L}}
\tikzset{
  VertexStyle/.append style = {shape=circle,draw, fill=black, minimum size=9pt, inner sep=-1pt},
  EdgeStyle/.append style = {-, thick},
  LoopStyle/.append style = {-}}
\tikzset{>={Latex[width=2.5mm,length=2.5mm]}}
\theoremstyle{plain}
\newcounter{thm}[section]
\title{{\large \textbf{SUM-PERFECT GRAPHS}}} \date{}
\author{Bart Litjens\thanks{Korteweg-De Vries Institute for Mathematics, University of Amsterdam. Email: \texttt{bart\_litjens@hotmail.com}, \texttt{s.c.polak@uva.nl}, \texttt{vaidysivaraman@gmail.com}. The research leading to these
results has received funding from the European Research Council under the European Union’s Seventh Framework Programme (FP7/2007-2013) / ERC grant agreement \textnumero 339109.} , Sven Polak\footnotemark[1] , Vaidy Sivaraman\footnotemark[1]}
\begin{document}
\maketitle
\setcounter{footnote}{1}

\noindent \textbf{Abstract.} Inspired by a famous characterization of perfect graphs due to Lov\'{a}sz, we define a graph $G$ to be \textit{sum-perfect} if for every induced subgraph $H$ of $G$, $\alpha(H)  + \omega(H) \geq |V(H)|$. (Here $\alpha$ and $\omega$ denote the stability number and clique number, respectively.) We give a set of $27$ graphs and we prove that a graph $G$ is sum-perfect if and only if $G$ does not contain any of the graphs in the set as an induced subgraph.

\,$\phantom{0}$

\noindent {\bf Keywords:} sum-perfect graph, forbidden induced subgraph, perfect graph, stable set, clique 

\noindent {\bf MSC 2010:} 05C17, 05C69, 05C75

\section{Introduction}
All graphs in this article are simple, finite, and undirected. If $G$ is a graph, then $\overline{G}$ denotes its complement. If $G$ and $H$ are graphs, then $G+H$ denotes the disjoint union of $G$ and $H$. We write $nG$ for the disjoint union of $n$ copies of $G$, with $n \geq 1$. For $n \geq 1$, by $P_n$ we denote the path on $n$ vertices and $K_n$ denotes the complete graph on $n$ vertices. For $n \geq 3$, we let $C_n$ denote the cycle on $n$ vertices. \\
\indent Define a graph $G$ to be \emph{sum-perfect} if for every induced subgraph $H$ of $G$, $\alpha(H)  + \omega(H) \geq |V(H)|$. Here, $\alpha$ is the stability number, i.e., the maximum size of a stable set. The parameter $\omega$ denotes the clique number; it is the maximum size of a clique. If~$\LL$ is a set of graphs, we say that~$G$ is~$\LL$-\emph{free} if~$G$ does not contain any graph in~$\LL$ as an induced subgraph. In this paper we prove the following.

\begin{theorem}\label{27th}
A graph~$G$ is sum-perfect if and only if it is~$\FF$-free, where the set ~$\FF:=\{H_1,\ldots,H_{27}\}$ is depicted in Figure~\ref{allgraphs}.
\end{theorem}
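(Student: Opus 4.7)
The proof splits into two directions. For the forward (\emph{only if}) direction, I would verify by inspection of Figure~\ref{allgraphs} that each $H_i\in\FF$ satisfies $\alpha(H_i)+\omega(H_i)<|V(H_i)|$. This shows each $H_i$ is itself not sum-perfect, and since sum-perfectness is closed under taking induced subgraphs, any graph containing some $H_i$ as an induced subgraph fails to be sum-perfect.

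For the reverse direction I argue by minimal counterexample. Suppose $G$ is $\FF$-free but not sum-perfect, and let $H$ be an induced subgraph of $G$ of smallest order with $\alpha(H)+\omega(H)<|V(H)|$; the goal is to prove $H\in\FF$, contradicting $\FF$-freeness. For every $v\in V(H)$, $H-v$ is sum-perfect by minimality, hence
\begin{equation*}
\alpha(H)+\omega(H)\geq \alpha(H-v)+\omega(H-v)\geq |V(H)|-1,
\end{equation*}
where the first inequality uses monotonicity of $\alpha$ and $\omega$ under vertex deletion. Combined with $\alpha(H)+\omega(H)<|V(H)|$, this forces $\alpha(H)+\omega(H)=|V(H)|-1$ and $\alpha(H-v)=\alpha(H)$, $\omega(H-v)=\omega(H)$ for every $v$. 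Equivalently, every vertex of $H$ lies outside some maximum independent set and outside some maximum clique. Because being sum-perfect is a self-complementary property and $\FF$ is visibly closed under complementation, I may further assume $\omega(H)\leq\alpha(H)$.

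The main obstacle is to establish the bound $|V(H)|\leq 7$. I would proceed by cases on $\omega(H)$, combining the avoidance property above with small Ramsey numbers (such as $R(3,3)=6$, $R(3,4)=9$ and $R(4,4)=18$) to pin down the possible triples $(|V(H)|,\alpha(H),\omega(H))$ and rule out $|V(H)|\geq 8$; this is the most delicate step, since it requires excluding entire structural families rather than isolated graphs. Once the bound is in hand the classification is a finite case analysis. For $|V(H)|=5$ only $(\alpha,\omega)=(2,2)$ is feasible and forces $H\cong C_5=H_1$. For $|V(H)|=6$ the only relevant case under $\omega\leq\alpha$ is $(\alpha,\omega)=(3,2)$; triangle-freeness combined with $C_5$-freeness (which is automatic because $C_5\in\FF$) forces $H$ bipartite, whereupon K\"onig's theorem together with $\alpha(H)=3$ extracts a perfect matching, accounting for the bipartite portion of $\FF$. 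For $|V(H)|=7$ the cases $(\alpha,\omega)\in\{(4,2),(3,3)\}$ are analyzed analogously, after excluding every already-identified smaller member of $\FF$ as induced subgraph, isolating the sporadic graph on seven vertices with $\omega\leq\alpha$. Reintroducing complements recovers the full set $\FF$, and a direct verification that each listed graph is genuinely minimal non-sum-perfect completes the argument.
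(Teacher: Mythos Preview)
Your overall framework matches the paper's: reduce to a minimal non-sum-perfect graph $H$, derive $\alpha(H)+\omega(H)=|V(H)|-1$ together with the ``avoidance property'' $\alpha(H-v)=\alpha(H)$ and $\omega(H-v)=\omega(H)$ for all $v$, exploit the self-complementarity of both the property and $\FF$, and then classify $H$ by its order. Your treatments of $|V(H)|=5$ and $|V(H)|=6$ are essentially the paper's (the paper phrases bipartiteness via perfectness rather than via ``triangle-free $+$ $C_5$-free on six vertices'', but on six vertices these coincide).

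The genuine gap is the step $|V(H)|\le 7$. Ramsey numbers do not do the job: for instance, $(\alpha,\omega)=(5,2)$ on $8$ vertices only needs $R(3,6)>8$, and $(\alpha,\omega)=(4,3)$ on $8$ vertices only needs $R(4,5)>8$, both of which hold comfortably, so Ramsey alone excludes nothing. The avoidance property by itself is also far from sufficient. What the paper actually proves is the structural bound $\max\{\alpha(H),\omega(H)\}\le 3$ (its Lemma~\ref{lemma:max2}), and the argument is long and uses $\FF$-freeness in an essential, non-Ramsey way: assuming $\alpha(H)\ge 4$, one fixes a maximum stable set $S$ and a maximum clique $M$, locates the one or two ``extra'' vertices, and in a cascade of subcases on $|N_S(x)|$ and on how the extra vertices attach to $M$ either derives a contradiction to the avoidance property or explicitly exhibits an induced copy of one of $H_3,H_4,H_6,H_7,H_9,H_{11}$ inside $H$. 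This is the heart of the proof and cannot be replaced by a Ramsey bound. A secondary point: for $|V(H)|=7$ the case $(\alpha,\omega)=(4,2)$ does not survive---the paper rules out $\omega=2$ for $n\ge 7$ via bipartiteness, K\"onig, and the avoidance property---leaving only $(3,3)$, whose analysis (the paper's Lemma~\ref{lemma:7}) is itself a careful triangle-counting argument rather than a quick exclusion.
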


\begin{figure}[H]
   \begin{subfigure}{0.137\linewidth}
  \scalebox{0.33}{
\begin{tikzpicture}
\definecolor{cv0}{rgb}{0.0,0.0,0.0}
\definecolor{cfv0}{rgb}{1.0,1.0,1.0}
\definecolor{clv0}{rgb}{0.0,0.0,0.0}
\definecolor{cv1}{rgb}{0.0,0.0,0.0}
\definecolor{cfv1}{rgb}{1.0,1.0,1.0}
\definecolor{clv1}{rgb}{0.0,0.0,0.0}
\definecolor{cv2}{rgb}{0.0,0.0,0.0}
\definecolor{cfv2}{rgb}{1.0,1.0,1.0}
\definecolor{clv2}{rgb}{0.0,0.0,0.0}
\definecolor{cv3}{rgb}{0.0,0.0,0.0}
\definecolor{cfv3}{rgb}{1.0,1.0,1.0}
\definecolor{clv3}{rgb}{0.0,0.0,0.0}
\definecolor{cv4}{rgb}{0.0,0.0,0.0}
\definecolor{cfv4}{rgb}{1.0,1.0,1.0}
\definecolor{clv4}{rgb}{0.0,0.0,0.0}
\definecolor{cv0v1}{rgb}{0.0,0.0,0.0}
\definecolor{cv0v4}{rgb}{0.0,0.0,0.0}
\definecolor{cv1v2}{rgb}{0.0,0.0,0.0}
\definecolor{cv2v3}{rgb}{0.0,0.0,0.0}
\definecolor{cv3v4}{rgb}{0.0,0.0,0.0}
\Vertex[style={minimum size=1.0cm,draw=cv0,fill=cfv0,text=clv0,shape=circle},LabelOut=false,L=\hbox{$0$},x=2.3678cm,y=0.0cm]{v0}
\Vertex[style={minimum size=1.0cm,draw=cv1,fill=cfv1,text=clv1,shape=circle},LabelOut=false,L=\hbox{$1$},x=5.0cm,y=1.5962cm]{v1}
\Vertex[style={minimum size=1.0cm,draw=cv2,fill=cfv2,text=clv2,shape=circle},LabelOut=false,L=\hbox{$2$},x=4.2879cm,y=4.6218cm]{v2}
\Vertex[style={minimum size=1.0cm,draw=cv3,fill=cfv3,text=clv3,shape=circle},LabelOut=false,L=\hbox{$3$},x=1.2053cm,y=5.0cm]{v3}
\Vertex[style={minimum size=1.0cm,draw=cv4,fill=cfv4,text=clv4,shape=circle},LabelOut=false,L=\hbox{$4$},x=0.0cm,y=2.2166cm]{v4}
\Edge[lw=0.1cm,style={color=cv0v1,},](v0)(v1)
\Edge[lw=0.1cm,style={color=cv0v4,},](v0)(v4)
\Edge[lw=0.1cm,style={color=cv1v2,},](v1)(v2)
\Edge[lw=0.1cm,style={color=cv2v3,},](v2)(v3)
\Edge[lw=0.1cm,style={color=cv3v4,},](v3)(v4)
\end{tikzpicture}}
\caption*{$H_1=C_5$}
\end{subfigure}
   \begin{subfigure}{0.137\linewidth}
  \scalebox{0.33}{
\begin{tikzpicture}
\definecolor{cv0}{rgb}{0.0,0.0,0.0}
\definecolor{cfv0}{rgb}{1.0,1.0,1.0}
\definecolor{clv0}{rgb}{0.0,0.0,0.0}
\definecolor{cv1}{rgb}{0.0,0.0,0.0}
\definecolor{cfv1}{rgb}{1.0,1.0,1.0}
\definecolor{clv1}{rgb}{0.0,0.0,0.0}
\definecolor{cv2}{rgb}{0.0,0.0,0.0}
\definecolor{cfv2}{rgb}{1.0,1.0,1.0}
\definecolor{clv2}{rgb}{0.0,0.0,0.0}
\definecolor{cv3}{rgb}{0.0,0.0,0.0}
\definecolor{cfv3}{rgb}{1.0,1.0,1.0}
\definecolor{clv3}{rgb}{0.0,0.0,0.0}
\definecolor{cv4}{rgb}{0.0,0.0,0.0}
\definecolor{cfv4}{rgb}{1.0,1.0,1.0}
\definecolor{clv4}{rgb}{0.0,0.0,0.0}
\definecolor{cv5}{rgb}{0.0,0.0,0.0}
\definecolor{cfv5}{rgb}{1.0,1.0,1.0}
\definecolor{clv5}{rgb}{0.0,0.0,0.0}
\definecolor{cv0v3}{rgb}{0.0,0.0,0.0}
\definecolor{cv1v4}{rgb}{0.0,0.0,0.0}
\definecolor{cv2v5}{rgb}{0.0,0.0,0.0}
\Vertex[style={minimum size=1.0cm,draw=cv0,fill=cfv0,text=clv0,shape=circle},LabelOut=false,L=\hbox{$0$},x=0.0cm,y=5cm]{v0}
\Vertex[style={minimum size=1.0cm,draw=cv1,fill=cfv1,text=clv1,shape=circle},LabelOut=false,L=\hbox{$1$},x=2.5cm,y=0cm]{v1}
\Vertex[style={minimum size=1.0cm,draw=cv2,fill=cfv2,text=clv2,shape=circle},LabelOut=false,L=\hbox{$2$},x=5cm,y=0cm]{v2}
\Vertex[style={minimum size=1.0cm,draw=cv3,fill=cfv3,text=clv3,shape=circle},LabelOut=false,L=\hbox{$3$},x=0.0cm,y=0cm]{v3}
\Vertex[style={minimum size=1.0cm,draw=cv4,fill=cfv4,text=clv4,shape=circle},LabelOut=false,L=\hbox{$4$},x=2.5cm,y=5cm]{v4}
\Vertex[style={minimum size=1.0cm,draw=cv5,fill=cfv5,text=clv5,shape=circle},LabelOut=false,L=\hbox{$5$},x=5.0cm,y=5.0cm]{v5}
\Edge[lw=0.1cm,style={color=cv0v3,},](v0)(v3)
\Edge[lw=0.1cm,style={color=cv1v4,},](v1)(v4)
\Edge[lw=0.1cm,style={color=cv2v5,},](v2)(v5)
\end{tikzpicture}}
\caption*{$H_2=3K_2$}
\end{subfigure}
 \begin{subfigure}{0.137\linewidth}
  \scalebox{0.33}{
\begin{tikzpicture}
\definecolor{cv0}{rgb}{0.0,0.0,0.0}
\definecolor{cfv0}{rgb}{1.0,1.0,1.0}
\definecolor{clv0}{rgb}{0.0,0.0,0.0}
\definecolor{cv1}{rgb}{0.0,0.0,0.0}
\definecolor{cfv1}{rgb}{1.0,1.0,1.0}
\definecolor{clv1}{rgb}{0.0,0.0,0.0}
\definecolor{cv2}{rgb}{0.0,0.0,0.0}
\definecolor{cfv2}{rgb}{1.0,1.0,1.0}
\definecolor{clv2}{rgb}{0.0,0.0,0.0}
\definecolor{cv3}{rgb}{0.0,0.0,0.0}
\definecolor{cfv3}{rgb}{1.0,1.0,1.0}
\definecolor{clv3}{rgb}{0.0,0.0,0.0}
\definecolor{cv4}{rgb}{0.0,0.0,0.0}
\definecolor{cfv4}{rgb}{1.0,1.0,1.0}
\definecolor{clv4}{rgb}{0.0,0.0,0.0}
\definecolor{cv5}{rgb}{0.0,0.0,0.0}
\definecolor{cfv5}{rgb}{1.0,1.0,1.0}
\definecolor{clv5}{rgb}{0.0,0.0,0.0}
\definecolor{cv0v3}{rgb}{0.0,0.0,0.0}
\definecolor{cv0v5}{rgb}{0.0,0.0,0.0}
\definecolor{cv1v4}{rgb}{0.0,0.0,0.0}
\definecolor{cv2v5}{rgb}{0.0,0.0,0.0}
\Vertex[style={minimum size=1.0cm,draw=cv0,fill=cfv0,text=clv0,shape=circle},LabelOut=false,L=\hbox{$0$},x=0.6365cm,y=3.0536cm]{v0}
\Vertex[style={minimum size=1.0cm,draw=cv1,fill=cfv1,text=clv1,shape=circle},LabelOut=false,L=\hbox{$1$},x=3.2355cm,y=0.0cm]{v1}
\Vertex[style={minimum size=1.0cm,draw=cv2,fill=cfv2,text=clv2,shape=circle},LabelOut=false,L=\hbox{$2$},x=1.9878cm,y=0.8935cm]{v2}
\Vertex[style={minimum size=1.0cm,draw=cv3,fill=cfv3,text=clv3,shape=circle},LabelOut=false,L=\hbox{$3$},x=0.0cm,y=4.1815cm]{v3}
\Vertex[style={minimum size=1.0cm,draw=cv4,fill=cfv4,text=clv4,shape=circle},LabelOut=false,L=\hbox{$4$},x=5.0cm,y=5.0cm]{v4}
\Vertex[style={minimum size=1.0cm,draw=cv5,fill=cfv5,text=clv5,shape=circle},LabelOut=false,L=\hbox{$5$},x=1.324cm,y=1.8714cm]{v5}
\Edge[lw=0.1cm,style={color=cv0v3,},](v0)(v3)
\Edge[lw=0.1cm,style={color=cv0v5,},](v0)(v5)
\Edge[lw=0.1cm,style={color=cv1v4,},](v1)(v4)
\Edge[lw=0.1cm,style={color=cv2v5,},](v2)(v5)
\end{tikzpicture}}
\caption*{$H_3=P_4 + K_2$}
\end{subfigure}
 \begin{subfigure}{0.137\linewidth}
  \scalebox{0.33}{
\begin{tikzpicture}
\definecolor{cv0}{rgb}{0.0,0.0,0.0}
\definecolor{cfv0}{rgb}{1.0,1.0,1.0}
\definecolor{clv0}{rgb}{0.0,0.0,0.0}
\definecolor{cv1}{rgb}{0.0,0.0,0.0}
\definecolor{cfv1}{rgb}{1.0,1.0,1.0}
\definecolor{clv1}{rgb}{0.0,0.0,0.0}
\definecolor{cv2}{rgb}{0.0,0.0,0.0}
\definecolor{cfv2}{rgb}{1.0,1.0,1.0}
\definecolor{clv2}{rgb}{0.0,0.0,0.0}
\definecolor{cv3}{rgb}{0.0,0.0,0.0}
\definecolor{cfv3}{rgb}{1.0,1.0,1.0}
\definecolor{clv3}{rgb}{0.0,0.0,0.0}
\definecolor{cv4}{rgb}{0.0,0.0,0.0}
\definecolor{cfv4}{rgb}{1.0,1.0,1.0}
\definecolor{clv4}{rgb}{0.0,0.0,0.0}
\definecolor{cv5}{rgb}{0.0,0.0,0.0}
\definecolor{cfv5}{rgb}{1.0,1.0,1.0}
\definecolor{clv5}{rgb}{0.0,0.0,0.0}
\definecolor{cv0v3}{rgb}{0.0,0.0,0.0}
\definecolor{cv0v5}{rgb}{0.0,0.0,0.0}
\definecolor{cv1v4}{rgb}{0.0,0.0,0.0}
\definecolor{cv1v5}{rgb}{0.0,0.0,0.0}
\definecolor{cv2v5}{rgb}{0.0,0.0,0.0}
\Vertex[style={minimum size=1.0cm,draw=cv0,fill=cfv0,text=clv0,shape=circle},LabelOut=false,L=\hbox{$0$},x=3.6735cm,y=1.7777cm]{v0}
\Vertex[style={minimum size=1.0cm,draw=cv1,fill=cfv1,text=clv1,shape=circle},LabelOut=false,L=\hbox{$1$},x=0.986cm,y=3.6396cm]{v1}
\Vertex[style={minimum size=1.0cm,draw=cv2,fill=cfv2,text=clv2,shape=circle},LabelOut=false,L=\hbox{$2$},x=1.7714cm,y=0.0cm]{v2}
\Vertex[style={minimum size=1.0cm,draw=cv3,fill=cfv3,text=clv3,shape=circle},LabelOut=false,L=\hbox{$3$},x=5.0cm,y=1.5981cm]{v3}
\Vertex[style={minimum size=1.0cm,draw=cv4,fill=cfv4,text=clv4,shape=circle},LabelOut=false,L=\hbox{$4$},x=0.0cm,y=5.0cm]{v4}
\Vertex[style={minimum size=1.0cm,draw=cv5,fill=cfv5,text=clv5,shape=circle},LabelOut=false,L=\hbox{$5$},x=2.1346cm,y=1.9626cm]{v5}
\Edge[lw=0.1cm,style={color=cv0v3,},](v0)(v3)
\Edge[lw=0.1cm,style={color=cv0v5,},](v0)(v5)
\Edge[lw=0.1cm,style={color=cv1v4,},](v1)(v4)
\Edge[lw=0.1cm,style={color=cv1v5,},](v1)(v5)
\Edge[lw=0.1cm,style={color=cv2v5,},](v2)(v5)
\end{tikzpicture}}
\caption*{$H_4=S_{1,2,2}$}
\end{subfigure}
 \begin{subfigure}{0.137\linewidth}
  \scalebox{0.33}{
 \begin{tikzpicture}
\definecolor{cv0}{rgb}{0.0,0.0,0.0}
\definecolor{cfv0}{rgb}{1.0,1.0,1.0}
\definecolor{clv0}{rgb}{0.0,0.0,0.0}
\definecolor{cv1}{rgb}{0.0,0.0,0.0}
\definecolor{cfv1}{rgb}{1.0,1.0,1.0}
\definecolor{clv1}{rgb}{0.0,0.0,0.0}
\definecolor{cv2}{rgb}{0.0,0.0,0.0}
\definecolor{cfv2}{rgb}{1.0,1.0,1.0}
\definecolor{clv2}{rgb}{0.0,0.0,0.0}
\definecolor{cv3}{rgb}{0.0,0.0,0.0}
\definecolor{cfv3}{rgb}{1.0,1.0,1.0}
\definecolor{clv3}{rgb}{0.0,0.0,0.0}
\definecolor{cv4}{rgb}{0.0,0.0,0.0}
\definecolor{cfv4}{rgb}{1.0,1.0,1.0}
\definecolor{clv4}{rgb}{0.0,0.0,0.0}
\definecolor{cv5}{rgb}{0.0,0.0,0.0}
\definecolor{cfv5}{rgb}{1.0,1.0,1.0}
\definecolor{clv5}{rgb}{0.0,0.0,0.0}
\definecolor{cv0v3}{rgb}{0.0,0.0,0.0}
\definecolor{cv1v4}{rgb}{0.0,0.0,0.0}
\definecolor{cv1v5}{rgb}{0.0,0.0,0.0}
\definecolor{cv2v4}{rgb}{0.0,0.0,0.0}
\definecolor{cv2v5}{rgb}{0.0,0.0,0.0}
\Vertex[style={minimum size=1.0cm,draw=cv0,fill=cfv0,text=clv0,shape=circle},LabelOut=false,L=\hbox{$0$},x=5.0cm,y=0.0cm]{v0}
\Vertex[style={minimum size=1.0cm,draw=cv1,fill=cfv1,text=clv1,shape=circle},LabelOut=false,L=\hbox{$1$},x=3.655cm,y=1.5823cm]{v1}
\Vertex[style={minimum size=1.0cm,draw=cv2,fill=cfv2,text=clv2,shape=circle},LabelOut=false,L=\hbox{$2$},x=0.0cm,y=3.4691cm]{v2}
\Vertex[style={minimum size=1.0cm,draw=cv3,fill=cfv3,text=clv3,shape=circle},LabelOut=false,L=\hbox{$3$},x=4.2121cm,y=5.0cm]{v3}
\Vertex[style={minimum size=1.0cm,draw=cv4,fill=cfv4,text=clv4,shape=circle},LabelOut=false,L=\hbox{$4$},x=2.5818cm,y=4.8218cm]{v4}
\Vertex[style={minimum size=1.0cm,draw=cv5,fill=cfv5,text=clv5,shape=circle},LabelOut=false,L=\hbox{$5$},x=1.1081cm,y=0.1267cm]{v5}
\Edge[lw=0.1cm,style={color=cv0v3,},](v0)(v3)
\Edge[lw=0.1cm,style={color=cv1v4,},](v1)(v4)
\Edge[lw=0.1cm,style={color=cv1v5,},](v1)(v5)
\Edge[lw=0.1cm,style={color=cv2v4,},](v2)(v4)
\Edge[lw=0.1cm,style={color=cv2v5,},](v2)(v5)
\end{tikzpicture}}
\caption*{$H_5=C_4+K_2$}
\end{subfigure}
 \begin{subfigure}{0.137\linewidth}
  \scalebox{0.33}{
\begin{tikzpicture}
\definecolor{cv0}{rgb}{0.0,0.0,0.0}
\definecolor{cfv0}{rgb}{1.0,1.0,1.0}
\definecolor{clv0}{rgb}{0.0,0.0,0.0}
\definecolor{cv1}{rgb}{0.0,0.0,0.0}
\definecolor{cfv1}{rgb}{1.0,1.0,1.0}
\definecolor{clv1}{rgb}{0.0,0.0,0.0}
\definecolor{cv2}{rgb}{0.0,0.0,0.0}
\definecolor{cfv2}{rgb}{1.0,1.0,1.0}
\definecolor{clv2}{rgb}{0.0,0.0,0.0}
\definecolor{cv3}{rgb}{0.0,0.0,0.0}
\definecolor{cfv3}{rgb}{1.0,1.0,1.0}
\definecolor{clv3}{rgb}{0.0,0.0,0.0}
\definecolor{cv4}{rgb}{0.0,0.0,0.0}
\definecolor{cfv4}{rgb}{1.0,1.0,1.0}
\definecolor{clv4}{rgb}{0.0,0.0,0.0}
\definecolor{cv5}{rgb}{0.0,0.0,0.0}
\definecolor{cfv5}{rgb}{1.0,1.0,1.0}
\definecolor{clv5}{rgb}{0.0,0.0,0.0}
\definecolor{cv0v3}{rgb}{0.0,0.0,0.0}
\definecolor{cv0v5}{rgb}{0.0,0.0,0.0}
\definecolor{cv1v4}{rgb}{0.0,0.0,0.0}
\definecolor{cv1v5}{rgb}{0.0,0.0,0.0}
\definecolor{cv2v4}{rgb}{0.0,0.0,0.0}
\Vertex[style={minimum size=1.0cm,draw=cv0,fill=cfv0,text=clv0,shape=circle},LabelOut=false,L=\hbox{$0$},x=4.2418cm,y=0.9385cm]{v0}
\Vertex[style={minimum size=1.0cm,draw=cv1,fill=cfv1,text=clv1,shape=circle},LabelOut=false,L=\hbox{$1$},x=2.1685cm,y=3.0915cm]{v1}
\Vertex[style={minimum size=1.0cm,draw=cv2,fill=cfv2,text=clv2,shape=circle},LabelOut=false,L=\hbox{$2$},x=0.0cm,y=5.0cm]{v2}
\Vertex[style={minimum size=1.0cm,draw=cv3,fill=cfv3,text=clv3,shape=circle},LabelOut=false,L=\hbox{$3$},x=5.0cm,y=0.0cm]{v3}
\Vertex[style={minimum size=1.0cm,draw=cv4,fill=cfv4,text=clv4,shape=circle},LabelOut=false,L=\hbox{$4$},x=1.0078cm,y=4.1348cm]{v4}
\Vertex[style={minimum size=1.0cm,draw=cv5,fill=cfv5,text=clv5,shape=circle},LabelOut=false,L=\hbox{$5$},x=3.2709cm,y=2.0112cm]{v5}
\Edge[lw=0.1cm,style={color=cv0v3,},](v0)(v3)
\Edge[lw=0.1cm,style={color=cv0v5,},](v0)(v5)
\Edge[lw=0.1cm,style={color=cv1v4,},](v1)(v4)
\Edge[lw=0.1cm,style={color=cv1v5,},](v1)(v5)
\Edge[lw=0.1cm,style={color=cv2v4,},](v2)(v4)
\end{tikzpicture}}
\caption*{$H_6=P_6$}
\end{subfigure} 
 \begin{subfigure}{0.137\linewidth}
  \scalebox{0.33}{
\begin{tikzpicture}
\definecolor{cv0}{rgb}{0.0,0.0,0.0}
\definecolor{cfv0}{rgb}{1.0,1.0,1.0}
\definecolor{clv0}{rgb}{0.0,0.0,0.0}
\definecolor{cv1}{rgb}{0.0,0.0,0.0}
\definecolor{cfv1}{rgb}{1.0,1.0,1.0}
\definecolor{clv1}{rgb}{0.0,0.0,0.0}
\definecolor{cv2}{rgb}{0.0,0.0,0.0}
\definecolor{cfv2}{rgb}{1.0,1.0,1.0}
\definecolor{clv2}{rgb}{0.0,0.0,0.0}
\definecolor{cv3}{rgb}{0.0,0.0,0.0}
\definecolor{cfv3}{rgb}{1.0,1.0,1.0}
\definecolor{clv3}{rgb}{0.0,0.0,0.0}
\definecolor{cv4}{rgb}{0.0,0.0,0.0}
\definecolor{cfv4}{rgb}{1.0,1.0,1.0}
\definecolor{clv4}{rgb}{0.0,0.0,0.0}
\definecolor{cv5}{rgb}{0.0,0.0,0.0}
\definecolor{cfv5}{rgb}{1.0,1.0,1.0}
\definecolor{clv5}{rgb}{0.0,0.0,0.0}
\definecolor{cv0v3}{rgb}{0.0,0.0,0.0}
\definecolor{cv0v5}{rgb}{0.0,0.0,0.0}
\definecolor{cv1v4}{rgb}{0.0,0.0,0.0}
\definecolor{cv1v5}{rgb}{0.0,0.0,0.0}
\definecolor{cv2v4}{rgb}{0.0,0.0,0.0}
\definecolor{cv2v5}{rgb}{0.0,0.0,0.0}
\Vertex[style={minimum size=1.0cm,draw=cv0,fill=cfv0,text=clv0,shape=circle},LabelOut=false,L=\hbox{$0$},x=3.7462cm,y=1.3599cm]{v0}
\Vertex[style={minimum size=1.0cm,draw=cv1,fill=cfv1,text=clv1,shape=circle},LabelOut=false,L=\hbox{$1$},x=0.6374cm,y=3.0514cm]{v1}
\Vertex[style={minimum size=1.0cm,draw=cv2,fill=cfv2,text=clv2,shape=circle},LabelOut=false,L=\hbox{$2$},x=1.3493cm,y=5.0cm]{v2}
\Vertex[style={minimum size=1.0cm,draw=cv3,fill=cfv3,text=clv3,shape=circle},LabelOut=false,L=\hbox{$3$},x=5.0cm,y=0.0cm]{v3}
\Vertex[style={minimum size=1.0cm,draw=cv4,fill=cfv4,text=clv4,shape=circle},LabelOut=false,L=\hbox{$4$},x=0.0cm,y=4.9229cm]{v4}
\Vertex[style={minimum size=1.0cm,draw=cv5,fill=cfv5,text=clv5,shape=circle},LabelOut=false,L=\hbox{$5$},x=2.1805cm,y=2.937cm]{v5}
\Edge[lw=0.1cm,style={color=cv0v3,},](v0)(v3)
\Edge[lw=0.1cm,style={color=cv0v5,},](v0)(v5)
\Edge[lw=0.1cm,style={color=cv1v4,},](v1)(v4)
\Edge[lw=0.1cm,style={color=cv1v5,},](v1)(v5)
\Edge[lw=0.1cm,style={color=cv2v4,},](v2)(v4)
\Edge[lw=0.1cm,style={color=cv2v5,},](v2)(v5)
\end{tikzpicture}}
\caption*{$H_7$}
\end{subfigure}\par\medskip
 \begin{subfigure}{0.137\linewidth}
  \scalebox{0.33}{
 \begin{tikzpicture}
\definecolor{cv0}{rgb}{0.0,0.0,0.0}
\definecolor{cfv0}{rgb}{1.0,1.0,1.0}
\definecolor{clv0}{rgb}{0.0,0.0,0.0}
\definecolor{cv1}{rgb}{0.0,0.0,0.0}
\definecolor{cfv1}{rgb}{1.0,1.0,1.0}
\definecolor{clv1}{rgb}{0.0,0.0,0.0}
\definecolor{cv2}{rgb}{0.0,0.0,0.0}
\definecolor{cfv2}{rgb}{1.0,1.0,1.0}
\definecolor{clv2}{rgb}{0.0,0.0,0.0}
\definecolor{cv3}{rgb}{0.0,0.0,0.0}
\definecolor{cfv3}{rgb}{1.0,1.0,1.0}
\definecolor{clv3}{rgb}{0.0,0.0,0.0}
\definecolor{cv4}{rgb}{0.0,0.0,0.0}
\definecolor{cfv4}{rgb}{1.0,1.0,1.0}
\definecolor{clv4}{rgb}{0.0,0.0,0.0}
\definecolor{cv5}{rgb}{0.0,0.0,0.0}
\definecolor{cfv5}{rgb}{1.0,1.0,1.0}
\definecolor{clv5}{rgb}{0.0,0.0,0.0}
\definecolor{cv0v3}{rgb}{0.0,0.0,0.0}
\definecolor{cv0v4}{rgb}{0.0,0.0,0.0}
\definecolor{cv1v3}{rgb}{0.0,0.0,0.0}
\definecolor{cv1v5}{rgb}{0.0,0.0,0.0}
\definecolor{cv2v4}{rgb}{0.0,0.0,0.0}
\definecolor{cv2v5}{rgb}{0.0,0.0,0.0}
\Vertex[style={minimum size=1.0cm,draw=cv0,fill=cfv0,text=clv0,shape=circle},LabelOut=false,L=\hbox{$0$},x=0.2328cm,y=4.0716cm]{v0}
\Vertex[style={minimum size=1.0cm,draw=cv1,fill=cfv1,text=clv1,shape=circle},LabelOut=false,L=\hbox{$1$},x=2.2601cm,y=0.0cm]{v1}
\Vertex[style={minimum size=1.0cm,draw=cv2,fill=cfv2,text=clv2,shape=circle},LabelOut=false,L=\hbox{$2$},x=5.0cm,y=3.4413cm]{v2}
\Vertex[style={minimum size=1.0cm,draw=cv3,fill=cfv3,text=clv3,shape=circle},LabelOut=false,L=\hbox{$3$},x=0.0cm,y=1.5618cm]{v3}
\Vertex[style={minimum size=1.0cm,draw=cv4,fill=cfv4,text=clv4,shape=circle},LabelOut=false,L=\hbox{$4$},x=2.8262cm,y=5.0cm]{v4}
\Vertex[style={minimum size=1.0cm,draw=cv5,fill=cfv5,text=clv5,shape=circle},LabelOut=false,L=\hbox{$5$},x=4.7728cm,y=0.9447cm]{v5}
\Edge[lw=0.1cm,style={color=cv0v3,},](v0)(v3)
\Edge[lw=0.1cm,style={color=cv0v4,},](v0)(v4)
\Edge[lw=0.1cm,style={color=cv1v3,},](v1)(v3)
\Edge[lw=0.1cm,style={color=cv1v5,},](v1)(v5)
\Edge[lw=0.1cm,style={color=cv2v4,},](v2)(v4)
\Edge[lw=0.1cm,style={color=cv2v5,},](v2)(v5)
\end{tikzpicture}}
\caption*{$H_8=C_6$}
\end{subfigure}
  \begin{subfigure}{0.137\linewidth}
  \scalebox{0.33}{
\begin{tikzpicture}
\definecolor{cv0}{rgb}{0.0,0.0,0.0}
\definecolor{cfv0}{rgb}{1.0,1.0,1.0}
\definecolor{clv0}{rgb}{0.0,0.0,0.0}
\definecolor{cv1}{rgb}{0.0,0.0,0.0}
\definecolor{cfv1}{rgb}{1.0,1.0,1.0}
\definecolor{clv1}{rgb}{0.0,0.0,0.0}
\definecolor{cv2}{rgb}{0.0,0.0,0.0}
\definecolor{cfv2}{rgb}{1.0,1.0,1.0}
\definecolor{clv2}{rgb}{0.0,0.0,0.0}
\definecolor{cv3}{rgb}{0.0,0.0,0.0}
\definecolor{cfv3}{rgb}{1.0,1.0,1.0}
\definecolor{clv3}{rgb}{0.0,0.0,0.0}
\definecolor{cv4}{rgb}{0.0,0.0,0.0}
\definecolor{cfv4}{rgb}{1.0,1.0,1.0}
\definecolor{clv4}{rgb}{0.0,0.0,0.0}
\definecolor{cv5}{rgb}{0.0,0.0,0.0}
\definecolor{cfv5}{rgb}{1.0,1.0,1.0}
\definecolor{clv5}{rgb}{0.0,0.0,0.0}
\definecolor{cv0v3}{rgb}{0.0,0.0,0.0}
\definecolor{cv0v4}{rgb}{0.0,0.0,0.0}
\definecolor{cv0v5}{rgb}{0.0,0.0,0.0}
\definecolor{cv1v4}{rgb}{0.0,0.0,0.0}
\definecolor{cv1v5}{rgb}{0.0,0.0,0.0}
\definecolor{cv2v5}{rgb}{0.0,0.0,0.0}
\Vertex[style={minimum size=1.0cm,draw=cv0,fill=cfv0,text=clv0,shape=circle},LabelOut=false,L=\hbox{$0$},x=3.8033cm,y=1.9728cm]{v0}
\Vertex[style={minimum size=1.0cm,draw=cv1,fill=cfv1,text=clv1,shape=circle},LabelOut=false,L=\hbox{$1$},x=2.7134cm,y=5.0cm]{v1}
\Vertex[style={minimum size=1.0cm,draw=cv2,fill=cfv2,text=clv2,shape=circle},LabelOut=false,L=\hbox{$2$},x=0.0cm,y=2.3837cm]{v2}
\Vertex[style={minimum size=1.0cm,draw=cv3,fill=cfv3,text=clv3,shape=circle},LabelOut=false,L=\hbox{$3$},x=5.0cm,y=0.0cm]{v3}
\Vertex[style={minimum size=1.0cm,draw=cv4,fill=cfv4,text=clv4,shape=circle},LabelOut=false,L=\hbox{$4$},x=4.2401cm,y=4.2374cm]{v4}
\Vertex[style={minimum size=1.0cm,draw=cv5,fill=cfv5,text=clv5,shape=circle},LabelOut=false,L=\hbox{$5$},x=1.8903cm,y=2.9163cm]{v5}
\Edge[lw=0.1cm,style={color=cv0v3,},](v0)(v3)
\Edge[lw=0.1cm,style={color=cv0v4,},](v0)(v4)
\Edge[lw=0.1cm,style={color=cv0v5,},](v0)(v5)
\Edge[lw=0.1cm,style={color=cv1v4,},](v1)(v4)
\Edge[lw=0.1cm,style={color=cv1v5,},](v1)(v5)
\Edge[lw=0.1cm,style={color=cv2v5,},](v2)(v5)
\end{tikzpicture}}
\caption*{$H_9$}
\end{subfigure}
   \begin{subfigure}{0.137\linewidth}
  \scalebox{0.33}{
 \begin{tikzpicture}
\definecolor{cv0}{rgb}{0.0,0.0,0.0}
\definecolor{cfv0}{rgb}{1.0,1.0,1.0}
\definecolor{clv0}{rgb}{0.0,0.0,0.0}
\definecolor{cv1}{rgb}{0.0,0.0,0.0}
\definecolor{cfv1}{rgb}{1.0,1.0,1.0}
\definecolor{clv1}{rgb}{0.0,0.0,0.0}
\definecolor{cv2}{rgb}{0.0,0.0,0.0}
\definecolor{cfv2}{rgb}{1.0,1.0,1.0}
\definecolor{clv2}{rgb}{0.0,0.0,0.0}
\definecolor{cv3}{rgb}{0.0,0.0,0.0}
\definecolor{cfv3}{rgb}{1.0,1.0,1.0}
\definecolor{clv3}{rgb}{0.0,0.0,0.0}
\definecolor{cv4}{rgb}{0.0,0.0,0.0}
\definecolor{cfv4}{rgb}{1.0,1.0,1.0}
\definecolor{clv4}{rgb}{0.0,0.0,0.0}
\definecolor{cv5}{rgb}{0.0,0.0,0.0}
\definecolor{cfv5}{rgb}{1.0,1.0,1.0}
\definecolor{clv5}{rgb}{0.0,0.0,0.0}
\definecolor{cv0v3}{rgb}{0.0,0.0,0.0}
\definecolor{cv0v4}{rgb}{0.0,0.0,0.0}
\definecolor{cv0v5}{rgb}{0.0,0.0,0.0}
\definecolor{cv1v3}{rgb}{0.0,0.0,0.0}
\definecolor{cv1v5}{rgb}{0.0,0.0,0.0}
\definecolor{cv2v4}{rgb}{0.0,0.0,0.0}
\definecolor{cv2v5}{rgb}{0.0,0.0,0.0}
\Vertex[style={minimum size=1.0cm,draw=cv0,fill=cfv0,text=clv0,shape=circle},LabelOut=false,L=\hbox{$0$},x=2.9337cm,y=3.5445cm]{v0}
\Vertex[style={minimum size=1.0cm,draw=cv1,fill=cfv1,text=clv1,shape=circle},LabelOut=false,L=\hbox{$1$},x=4.0514cm,y=0.0cm]{v1}
\Vertex[style={minimum size=1.0cm,draw=cv2,fill=cfv2,text=clv2,shape=circle},LabelOut=false,L=\hbox{$2$},x=0.0cm,y=2.9066cm]{v2}
\Vertex[style={minimum size=1.0cm,draw=cv3,fill=cfv3,text=clv3,shape=circle},LabelOut=false,L=\hbox{$3$},x=5.0cm,y=2.0991cm]{v3}
\Vertex[style={minimum size=1.0cm,draw=cv4,fill=cfv4,text=clv4,shape=circle},LabelOut=false,L=\hbox{$4$},x=0.8763cm,y=5.0cm]{v4}
\Vertex[style={minimum size=1.0cm,draw=cv5,fill=cfv5,text=clv5,shape=circle},LabelOut=false,L=\hbox{$5$},x=1.9996cm,y=1.367cm]{v5}
\Edge[lw=0.1cm,style={color=cv0v3,},](v0)(v3)
\Edge[lw=0.1cm,style={color=cv0v4,},](v0)(v4)
\Edge[lw=0.1cm,style={color=cv0v5,},](v0)(v5)
\Edge[lw=0.1cm,style={color=cv1v3,},](v1)(v3)
\Edge[lw=0.1cm,style={color=cv1v5,},](v1)(v5)
\Edge[lw=0.1cm,style={color=cv2v4,},](v2)(v4)
\Edge[lw=0.1cm,style={color=cv2v5,},](v2)(v5)
\end{tikzpicture}}
\caption*{$H_{10}$}
\end{subfigure}
 \begin{subfigure}{0.137\linewidth}
  \scalebox{0.33}{
 \begin{tikzpicture}
\definecolor{cv0}{rgb}{0.0,0.0,0.0}
\definecolor{cfv0}{rgb}{1.0,1.0,1.0}
\definecolor{clv0}{rgb}{0.0,0.0,0.0}
\definecolor{cv1}{rgb}{0.0,0.0,0.0}
\definecolor{cfv1}{rgb}{1.0,1.0,1.0}
\definecolor{clv1}{rgb}{0.0,0.0,0.0}
\definecolor{cv2}{rgb}{0.0,0.0,0.0}
\definecolor{cfv2}{rgb}{1.0,1.0,1.0}
\definecolor{clv2}{rgb}{0.0,0.0,0.0}
\definecolor{cv3}{rgb}{0.0,0.0,0.0}
\definecolor{cfv3}{rgb}{1.0,1.0,1.0}
\definecolor{clv3}{rgb}{0.0,0.0,0.0}
\definecolor{cv4}{rgb}{0.0,0.0,0.0}
\definecolor{cfv4}{rgb}{1.0,1.0,1.0}
\definecolor{clv4}{rgb}{0.0,0.0,0.0}
\definecolor{cv5}{rgb}{0.0,0.0,0.0}
\definecolor{cfv5}{rgb}{1.0,1.0,1.0}
\definecolor{clv5}{rgb}{0.0,0.0,0.0}
\definecolor{cv0v3}{rgb}{0.0,0.0,0.0}
\definecolor{cv0v4}{rgb}{0.0,0.0,0.0}
\definecolor{cv0v5}{rgb}{0.0,0.0,0.0}
\definecolor{cv1v3}{rgb}{0.0,0.0,0.0}
\definecolor{cv1v4}{rgb}{0.0,0.0,0.0}
\definecolor{cv1v5}{rgb}{0.0,0.0,0.0}
\definecolor{cv2v5}{rgb}{0.0,0.0,0.0}
\Vertex[style={minimum size=1.0cm,draw=cv0,fill=cfv0,text=clv0,shape=circle},LabelOut=false,L=\hbox{$0$},x=3.8061cm,y=3.9074cm]{v0}
\Vertex[style={minimum size=1.0cm,draw=cv1,fill=cfv1,text=clv1,shape=circle},LabelOut=false,L=\hbox{$1$},x=3.7634cm,y=1.4154cm]{v1}
\Vertex[style={minimum size=1.0cm,draw=cv2,fill=cfv2,text=clv2,shape=circle},LabelOut=false,L=\hbox{$2$},x=0.0cm,y=3.2337cm]{v2}
\Vertex[style={minimum size=1.0cm,draw=cv3,fill=cfv3,text=clv3,shape=circle},LabelOut=false,L=\hbox{$3$},x=5.0cm,y=5.0cm]{v3}
\Vertex[style={minimum size=1.0cm,draw=cv4,fill=cfv4,text=clv4,shape=circle},LabelOut=false,L=\hbox{$4$},x=4.8888cm,y=0.0cm]{v4}
\Vertex[style={minimum size=1.0cm,draw=cv5,fill=cfv5,text=clv5,shape=circle},LabelOut=false,L=\hbox{$5$},x=1.9221cm,y=2.9594cm]{v5}
\Edge[lw=0.1cm,style={color=cv0v3,},](v0)(v3)
\Edge[lw=0.1cm,style={color=cv0v4,},](v0)(v4)
\Edge[lw=0.1cm,style={color=cv0v5,},](v0)(v5)
\Edge[lw=0.1cm,style={color=cv1v3,},](v1)(v3)
\Edge[lw=0.1cm,style={color=cv1v4,},](v1)(v4)
\Edge[lw=0.1cm,style={color=cv1v5,},](v1)(v5)
\Edge[lw=0.1cm,style={color=cv2v5,},](v2)(v5)
\end{tikzpicture}}
\caption*{$H_{11}$}
\end{subfigure}
 \begin{subfigure}{0.137\linewidth}
  \scalebox{0.33}{
\begin{tikzpicture}
\definecolor{cv0}{rgb}{0.0,0.0,0.0}
\definecolor{cfv0}{rgb}{1.0,1.0,1.0}
\definecolor{clv0}{rgb}{0.0,0.0,0.0}
\definecolor{cv1}{rgb}{0.0,0.0,0.0}
\definecolor{cfv1}{rgb}{1.0,1.0,1.0}
\definecolor{clv1}{rgb}{0.0,0.0,0.0}
\definecolor{cv2}{rgb}{0.0,0.0,0.0}
\definecolor{cfv2}{rgb}{1.0,1.0,1.0}
\definecolor{clv2}{rgb}{0.0,0.0,0.0}
\definecolor{cv3}{rgb}{0.0,0.0,0.0}
\definecolor{cfv3}{rgb}{1.0,1.0,1.0}
\definecolor{clv3}{rgb}{0.0,0.0,0.0}
\definecolor{cv4}{rgb}{0.0,0.0,0.0}
\definecolor{cfv4}{rgb}{1.0,1.0,1.0}
\definecolor{clv4}{rgb}{0.0,0.0,0.0}
\definecolor{cv5}{rgb}{0.0,0.0,0.0}
\definecolor{cfv5}{rgb}{1.0,1.0,1.0}
\definecolor{clv5}{rgb}{0.0,0.0,0.0}
\definecolor{cv0v3}{rgb}{0.0,0.0,0.0}
\definecolor{cv0v4}{rgb}{0.0,0.0,0.0}
\definecolor{cv0v5}{rgb}{0.0,0.0,0.0}
\definecolor{cv1v3}{rgb}{0.0,0.0,0.0}
\definecolor{cv1v4}{rgb}{0.0,0.0,0.0}
\definecolor{cv1v5}{rgb}{0.0,0.0,0.0}
\definecolor{cv2v4}{rgb}{0.0,0.0,0.0}
\definecolor{cv2v5}{rgb}{0.0,0.0,0.0}
\Vertex[style={minimum size=1.0cm,draw=cv0,fill=cfv0,text=clv0,shape=circle},LabelOut=false,L=\hbox{$0$},x=1.475cm,y=3.5267cm]{v0}
\Vertex[style={minimum size=1.0cm,draw=cv1,fill=cfv1,text=clv1,shape=circle},LabelOut=false,L=\hbox{$1$},x=5.0cm,y=2.9799cm]{v1}
\Vertex[style={minimum size=1.0cm,draw=cv2,fill=cfv2,text=clv2,shape=circle},LabelOut=false,L=\hbox{$2$},x=0.7092cm,y=0.0cm]{v2}
\Vertex[style={minimum size=1.0cm,draw=cv3,fill=cfv3,text=clv3,shape=circle},LabelOut=false,L=\hbox{$3$},x=4.6792cm,y=5.0cm]{v3}
\Vertex[style={minimum size=1.0cm,draw=cv4,fill=cfv4,text=clv4,shape=circle},LabelOut=false,L=\hbox{$4$},x=3.9447cm,y=1.4905cm]{v4}
\Vertex[style={minimum size=1.0cm,draw=cv5,fill=cfv5,text=clv5,shape=circle},LabelOut=false,L=\hbox{$5$},x=0.0cm,y=1.9005cm]{v5}
\Edge[lw=0.1cm,style={color=cv0v3,},](v0)(v3)
\Edge[lw=0.1cm,style={color=cv0v4,},](v0)(v4)
\Edge[lw=0.1cm,style={color=cv0v5,},](v0)(v5)
\Edge[lw=0.1cm,style={color=cv1v3,},](v1)(v3)
\Edge[lw=0.1cm,style={color=cv1v4,},](v1)(v4)
\Edge[lw=0.1cm,style={color=cv1v5,},](v1)(v5)
\Edge[lw=0.1cm,style={color=cv2v4,},](v2)(v4)
\Edge[lw=0.1cm,style={color=cv2v5,},](v2)(v5)
\end{tikzpicture}}
\caption*{$H_{12}=K_{3,3}-e$}
\end{subfigure}
 \begin{subfigure}{0.137\linewidth}
  \scalebox{0.33}{
\begin{tikzpicture}
\definecolor{cv0}{rgb}{0.0,0.0,0.0}
\definecolor{cfv0}{rgb}{1.0,1.0,1.0}
\definecolor{clv0}{rgb}{0.0,0.0,0.0}
\definecolor{cv1}{rgb}{0.0,0.0,0.0}
\definecolor{cfv1}{rgb}{1.0,1.0,1.0}
\definecolor{clv1}{rgb}{0.0,0.0,0.0}
\definecolor{cv2}{rgb}{0.0,0.0,0.0}
\definecolor{cfv2}{rgb}{1.0,1.0,1.0}
\definecolor{clv2}{rgb}{0.0,0.0,0.0}
\definecolor{cv3}{rgb}{0.0,0.0,0.0}
\definecolor{cfv3}{rgb}{1.0,1.0,1.0}
\definecolor{clv3}{rgb}{0.0,0.0,0.0}
\definecolor{cv4}{rgb}{0.0,0.0,0.0}
\definecolor{cfv4}{rgb}{1.0,1.0,1.0}
\definecolor{clv4}{rgb}{0.0,0.0,0.0}
\definecolor{cv5}{rgb}{0.0,0.0,0.0}
\definecolor{cfv5}{rgb}{1.0,1.0,1.0}
\definecolor{clv5}{rgb}{0.0,0.0,0.0}
\definecolor{cv0v3}{rgb}{0.0,0.0,0.0}
\definecolor{cv0v4}{rgb}{0.0,0.0,0.0}
\definecolor{cv0v5}{rgb}{0.0,0.0,0.0}
\definecolor{cv1v3}{rgb}{0.0,0.0,0.0}
\definecolor{cv1v4}{rgb}{0.0,0.0,0.0}
\definecolor{cv1v5}{rgb}{0.0,0.0,0.0}
\definecolor{cv2v3}{rgb}{0.0,0.0,0.0}
\definecolor{cv2v4}{rgb}{0.0,0.0,0.0}
\definecolor{cv2v5}{rgb}{0.0,0.0,0.0}
\Vertex[style={minimum
size=1.0cm,draw=cv0,fill=cfv0,text=clv0,shape=circle},LabelOut=false,L=\hbox{$0$},x=0.0cm,y=5.0cm]{v0}
\Vertex[style={minimum
size=1.0cm,draw=cv1,fill=cfv1,text=clv1,shape=circle},LabelOut=false,L=\hbox{$1$},x=2.5cm,y=5.0cm]{v1}
\Vertex[style={minimum
size=1.0cm,draw=cv2,fill=cfv2,text=clv2,shape=circle},LabelOut=false,L=\hbox{$2$},x=5.0cm,y=5.0cm]{v2}
\Vertex[style={minimum
size=1.0cm,draw=cv3,fill=cfv3,text=clv3,shape=circle},LabelOut=false,L=\hbox{$3$},x=0.0cm,y=0.0cm]{v3}
\Vertex[style={minimum
size=1.0cm,draw=cv4,fill=cfv4,text=clv4,shape=circle},LabelOut=false,L=\hbox{$4$},x=2.5cm,y=0.0cm]{v4}
\Vertex[style={minimum
size=1.0cm,draw=cv5,fill=cfv5,text=clv5,shape=circle},LabelOut=false,L=\hbox{$5$},x=5.0cm,y=0.0cm]{v5}
\Edge[lw=0.1cm,style={color=cv0v3,},](v0)(v3)
\Edge[lw=0.1cm,style={color=cv0v4,},](v0)(v4)
\Edge[lw=0.1cm,style={color=cv0v5,},](v0)(v5)
\Edge[lw=0.1cm,style={color=cv1v3,},](v1)(v3)
\Edge[lw=0.1cm,style={color=cv1v4,},](v1)(v4)
\Edge[lw=0.1cm,style={color=cv1v5,},](v1)(v5)
\Edge[lw=0.1cm,style={color=cv2v3,},](v2)(v3)
\Edge[lw=0.1cm,style={color=cv2v4,},](v2)(v4)
\Edge[lw=0.1cm,style={color=cv2v5,},](v2)(v5)
\end{tikzpicture}}
\caption*{$H_{13} = K_{3,3}$}
\end{subfigure}
 \begin{subfigure}{0.137\linewidth}
  \scalebox{0.33}{
\begin{tikzpicture}
\definecolor{cv0}{rgb}{0.0,0.0,0.0}
\definecolor{cfv0}{rgb}{1.0,1.0,1.0}
\definecolor{clv0}{rgb}{0.0,0.0,0.0}
\definecolor{cv1}{rgb}{0.0,0.0,0.0}
\definecolor{cfv1}{rgb}{1.0,1.0,1.0}
\definecolor{clv1}{rgb}{0.0,0.0,0.0}
\definecolor{cv2}{rgb}{0.0,0.0,0.0}
\definecolor{cfv2}{rgb}{1.0,1.0,1.0}
\definecolor{clv2}{rgb}{0.0,0.0,0.0}
\definecolor{cv3}{rgb}{0.0,0.0,0.0}
\definecolor{cfv3}{rgb}{1.0,1.0,1.0}
\definecolor{clv3}{rgb}{0.0,0.0,0.0}
\definecolor{cv4}{rgb}{0.0,0.0,0.0}
\definecolor{cfv4}{rgb}{1.0,1.0,1.0}
\definecolor{clv4}{rgb}{0.0,0.0,0.0}
\definecolor{cv5}{rgb}{0.0,0.0,0.0}
\definecolor{cfv5}{rgb}{1.0,1.0,1.0}
\definecolor{clv5}{rgb}{0.0,0.0,0.0}
\definecolor{cv6}{rgb}{0.0,0.0,0.0}
\definecolor{cfv6}{rgb}{1.0,1.0,1.0}
\definecolor{clv6}{rgb}{0.0,0.0,0.0}
\definecolor{cv0v3}{rgb}{0.0,0.0,0.0}
\definecolor{cv0v5}{rgb}{0.0,0.0,0.0}
\definecolor{cv0v6}{rgb}{0.0,0.0,0.0}
\definecolor{cv1v4}{rgb}{0.0,0.0,0.0}
\definecolor{cv1v5}{rgb}{0.0,0.0,0.0}
\definecolor{cv2v4}{rgb}{0.0,0.0,0.0}
\definecolor{cv2v6}{rgb}{0.0,0.0,0.0}
\definecolor{cv4v5}{rgb}{0.0,0.0,0.0}
\definecolor{cv4v6}{rgb}{0.0,0.0,0.0}
\definecolor{cv5v6}{rgb}{0.0,0.0,0.0}
\Vertex[style={minimum size=1.0cm,draw=cv0,fill=cfv0,text=clv0,shape=circle},LabelOut=false,L=\hbox{$0$},x=1.6399cm,y=1.7777cm]{v0}
\Vertex[style={minimum size=1.0cm,draw=cv1,fill=cfv1,text=clv1,shape=circle},LabelOut=false,L=\hbox{$1$},x=5.0cm,y=4.6271cm]{v1}
\Vertex[style={minimum size=1.0cm,draw=cv2,fill=cfv2,text=clv2,shape=circle},LabelOut=false,L=\hbox{$2$},x=0.0cm,y=5.0cm]{v2}
\Vertex[style={minimum size=1.0cm,draw=cv3,fill=cfv3,text=clv3,shape=circle},LabelOut=false,L=\hbox{$3$},x=1.0615cm,y=0.0cm]{v3}
\Vertex[style={minimum size=1.0cm,draw=cv4,fill=cfv4,text=clv4,shape=circle},LabelOut=false,L=\hbox{$4$},x=2.4726cm,y=4.7027cm]{v4}
\Vertex[style={minimum size=1.0cm,draw=cv5,fill=cfv5,text=clv5,shape=circle},LabelOut=false,L=\hbox{$5$},x=3.2604cm,y=3.3562cm]{v5}
\Vertex[style={minimum size=1.0cm,draw=cv6,fill=cfv6,text=clv6,shape=circle},LabelOut=false,L=\hbox{$6$},x=1.0893cm,y=3.4919cm]{v6}
\Edge[lw=0.1cm,style={color=cv0v3,},](v0)(v3)
\Edge[lw=0.1cm,style={color=cv0v5,},](v0)(v5)
\Edge[lw=0.1cm,style={color=cv0v6,},](v0)(v6)
\Edge[lw=0.1cm,style={color=cv1v4,},](v1)(v4)
\Edge[lw=0.1cm,style={color=cv1v5,},](v1)(v5)
\Edge[lw=0.1cm,style={color=cv2v4,},](v2)(v4)
\Edge[lw=0.1cm,style={color=cv2v6,},](v2)(v6)
\Edge[lw=0.1cm,style={color=cv4v5,},](v4)(v5)
\Edge[lw=0.1cm,style={color=cv4v6,},](v4)(v6)
\Edge[lw=0.1cm,style={color=cv5v6,},](v5)(v6)
\end{tikzpicture}}
\caption*{$H_{26}$}
\end{subfigure}\par\medskip
  \caption{The graphs in~$\FF$. For each graph, also the complement is contained in~$\FF$. For~$i \in \{14,\ldots,25\}$, we define~$H_{i}:=\overline{H_{i-12}}$, and we define~$H_{27}:=\overline{H_{26}}$.}\label{allgraphs}
\end{figure}
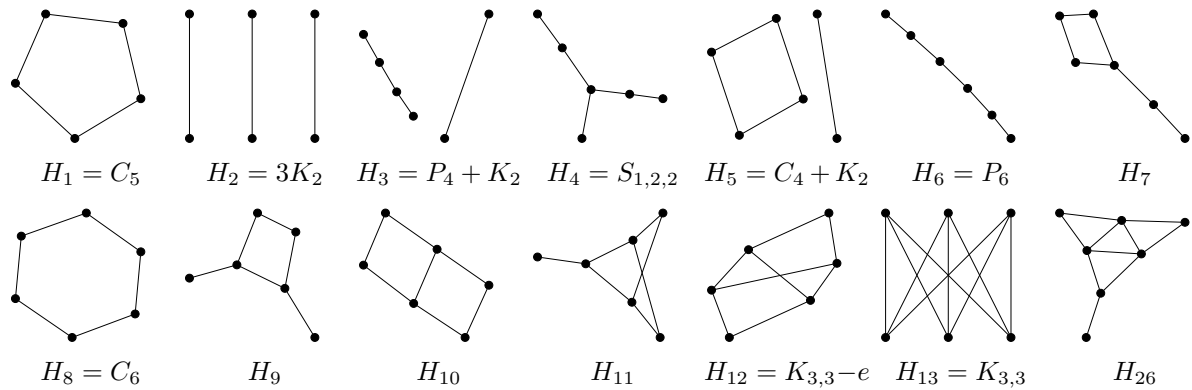

\noindent The set~$\FF$ of Theorem \ref{27th} consists of the 5-cycle $H_1$, all bipartite graphs $H_2,\ldots,H_{13}$ on 6 vertices containing a perfect matching, their complements~$H_{14},\ldots,H_{25}$, and two complementary graphs~$H_{26}$ and~$H_{27}$ on 7 vertices. 

The motivation for studying sum-perfect graphs comes from the following characterization of perfect graphs. Recall that a graph $G$ is perfect if $\chi(H) = \omega(H)$ for all induced subgraphs $H$ of $G$. Here, $\chi$ denotes the chromatic number. Lov\'{a}sz \cite{LL} proved that a graph $G$ is perfect if and only if $\alpha(H) \omega(H) \geq |V(H)|$ for all induced subgraphs $H$ of $G$. When multiplication is replaced by addition, we move from perfect graphs to sum-perfect graphs. Since the condition is strengthened, the class of sum-perfect graphs is a subclass of the class of perfect graphs. 

The strong perfect graph theorem \cite{CRST} asserts that a graph is perfect if and only if it neither has $C_n$ nor $\overline{C_n}$ as an induced subgraph, for odd $n\geq 5$. We determine all the forbidden induced subgraphs for the class of sum-perfect graphs, i.e, graphs that are not sum-perfect but for which every proper induced subgraph is sum-perfect. As~$\alpha(G)=\omega(\overline{G})$ and~$\omega(G)=\alpha(\overline{G})$, the class of sum-perfect graphs is closed under taking the complement.

We give another reason why sum-perfect graphs are interesting. It is based on the following easy observation. 
\begin{align}\label{align:msobservatie}
\text{Let $G$ be a graph, $S$ a stable set and $M$ a clique. Then $|S \cap M| \leq 1$.}
\end{align}
\noindent This implies that $\alpha(G) + \omega(G) \leq |V(G)|+1$.  In Theorem \ref{theorem:threshold} we prove that the class of graphs all of whose induced subgraphs attain this upper bound, coincides with the class of threshold graphs (see Example \ref{example:exasp} for the definition). This yields a characterization of threshold graphs that we could not find in the literature. Fixing $c \geq 0$, the class of graphs $G$ for which $\alpha(H) + \omega(H) = |V(H)|-c$ for every induced subgraph $H$ of $G$, is trivially empty. Indeed, an isolated vertex already does not satisfy this condition. Hence the natural generalization of threshold graphs from this point of view, is to consider graphs $G$ for which $\alpha(H) + \omega(H) \geq |V(H)|-c$ for every induced subgraph $H$ of $G$. For $c = 0$, we obtain the sum-perfect graphs.

\begin{examp}\label{example:exasp}
The following classes of graphs are examples of sum-perfect graphs: split graphs and (apex) threshold graphs. A graph is \emph{split} it its vertex set can be partitioned into a clique and a stable set. A graph is \emph{threshold} if there exists an ordering~$(v_1,\ldots,v_n)$ of the vertices such that each vertex~$v_i$ is either adjacent or non-adjacent to all vertices~$v_1,\ldots,v_{i-1}$. A graph is \emph{apex-threshold} if it contains a vertex whose deletion results in a threshold graph.
\end{examp}

\begin{theorem}\label{theorem:threshold}
A graph $G$ is threshold if and only if for every induced subgraph $H$ of $G$, $\alpha(H) + \omega(H) = |V(H)| + 1$. 
\end{theorem}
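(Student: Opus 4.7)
The plan is to prove both directions independently: one by induction on the threshold construction, the other by invoking the classical forbidden-subgraph characterization of threshold graphs.

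For the forward direction, I would proceed by induction on $|V(G)|$. The base case of a single vertex is immediate since $\alpha = \omega = 1$. Since being threshold is preserved under taking induced subgraphs (restrict the ordering to the subgraph), it suffices to prove the equality $\alpha(G) + \omega(G) = |V(G)| + 1$ itself for every threshold $G$. Let $v_n$ be the last vertex in a threshold ordering and set $G' = G - v_n$. Split into two cases. If $v_n$ is adjacent to all of $V(G')$, then any maximum clique of $G'$ extends to one of $G$ by adding $v_n$, so $\omega(G) = \omega(G') + 1$; and since $v_n$ dominates, $\alpha(G) = \alpha(G')$ (as a stable set containing $v_n$ has size one, while $\alpha(G') \geq 1$). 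The induction hypothesis then gives the desired identity. The isolated case is symmetric: $\alpha(G) = \alpha(G') + 1$ and $\omega(G) = \omega(G')$.

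For the backward direction, I would invoke the well-known characterization that a graph is threshold if and only if it contains no induced $2K_2$, $P_4$, or $C_4$. The task then reduces to observing that each of these three four-vertex graphs violates the hypothesis: in all three cases $|V| = 4$ and $\alpha = \omega = 2$, so $\alpha + \omega = 4 < 5 = |V| + 1$. Therefore if $G$ satisfied the hypothesis but contained one of these as an induced subgraph, the induced subgraph (being one of them) would fail the equality, a contradiction. Hence $G$ is $\{2K_2, P_4, C_4\}$-free and thus threshold.

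I do not foresee any real obstacle. The only minor care needed is the edge case in the inductive step where $G'$ could be empty, which is handled by the base case, and the fact that $\alpha(G') \geq 1$ and $\omega(G') \geq 1$ whenever $G'$ is nonempty, ensuring the stated formulas for $\alpha(G)$ and $\omega(G)$ in each case. The backward direction is essentially a one-line verification once the classical forbidden-subgraph characterization of threshold graphs is cited.
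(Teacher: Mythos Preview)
Your proposal is correct and follows essentially the same approach as the paper: induction on $|V(G)|$ via the dominating/isolated last vertex for the forward direction, and the Chv\'atal--Hammer forbidden-subgraph characterization (checking $P_4$, $C_4$, $2K_2$ each have $\alpha+\omega=4<5$) for the converse. The only cosmetic difference is that you explicitly note the hereditary property of threshold graphs to reduce to proving the equality for $G$ itself, whereas the paper leaves this implicit.
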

\begin{proof}
Let $G$ be a threshold graph. We argue by induction on $|V(G)|$. The base case is trivial. From the definition $G$ either contains a dominating vertex or an isolated vertex. Suppose $G$ has a dominating vertex $v$. Let $H := G - v$. By the induction hypothesis, $\alpha(H) + \omega(H) = |V(H)| + 1$. But $\alpha(G) = \alpha(H)$ and $\omega(G) = \omega(H) + 1$. Hence, $\alpha(G) + \omega(G) = \alpha(H) + \omega(H) + 1 = |V(H)| + 1 + 1 = |V(G)| + 1$. The case when there is an isolated vertex is similar. \\
\indent For the converse, let $G$ be a graph such that every induced subgraph $H$ of $G$ satisfies $\alpha(H) + \omega(H) = |V(H)| + 1$. It is easy to check that the three graphs $P_4, C_4, 2K_2$ fail to satisfy the condition. Hence $G$ contains none of $P_4, C_4, 2K_2$ as an induced subgraph. But a well known theorem tells that a graph is threshold if and only if it contains none of $P_4, C_4, 2K_2$ as an induced subgraph (see \cite{CH73}). Hence $G$ is threshold. 
\end{proof}

\begin{examp}\label{examp:nonexasp}
The class of sum-perfect graphs is contained in the class of weakly chordal graphs. A graph is \emph{weakly chordal} if it contains neither a cycle of length at least $5$ nor its complement as an induced subgraph. 
\end{examp}

With regard to complexity, we note that computing $\alpha(G)+\omega(G)$ of a graph $G$ is $\text{NP}$-hard, as computing the stability number for triangle-free graphs is already $\text{NP}$-hard \cite{P}. 
In Section \ref{section:add}, we briefly address some further optimization problems. Section \ref{section:proof} is devoted to the proof of Theorem \ref{27th}. 

\section{Forbidden induced subgraphs}\label{section:proof}

Let $\Forb$ denote the collection of forbidden induced subgraphs for the class of sum-perfect graphs and let~$\FF$ denote the set of~$27$ graphs from Figure \ref{allgraphs}. If $\LL$ is a collection of graphs, then for $n \geq 1$, let $\LL_n$ denote the graphs in $\LL$ that have $n$ vertices. We define $\LL_{\leq n} := \cup_{i=1}^{n}\LL_i$. In Section \ref{section:inclusion1} we show that~$\Forb_{\leq 7} = \FF$. In Section \ref{section:inclusion2} we prove that $\Forb_n$ is empty for $n \geq 8$. Combining both results yields Theorem \ref{27th}.

\subsection{Forbidden graphs with at most $7$ vertices}\label{section:inclusion1}

With notation as above, we first determine $\Forb_{\leq 5}$ and then show that the graphs in $\FF$ with $6$ vertices are forbidden.

\begin{lemma}\label{lemma:5vertex}
We have that $\mathrm{Forb}_{\leq 5} = \{C_5\}$.
\end{lemma}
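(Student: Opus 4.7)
The approach rests on one simple reduction. If $G \in \mathrm{Forb}$ has $n$ vertices, then by minimality every proper induced subgraph of $G$ is sum-perfect, i.e.\ satisfies the sum-perfect inequality. Hence the only induced subgraph that can witness the failure of sum-perfectness is $G$ itself, so $\alpha(G) + \omega(G) < n$. Thus it suffices to classify, for $n \leq 5$, those graphs satisfying $\alpha(G)+\omega(G)<n$ and then check whether each is actually in $\mathrm{Forb}$.

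First I would dispose of $n \leq 4$. If $\omega(G)=1$ then $G$ is edgeless, so $\alpha(G)=n$, and if $\alpha(G)=1$ then $G=K_n$, so $\omega(G)=n$; either way $\alpha+\omega>n$. Hence $\alpha(G),\omega(G)\geq 2$ and $\alpha(G)+\omega(G)\geq 4\geq n$, contradicting the assumed strict inequality. Therefore no graph on at most $4$ vertices lies in $\mathrm{Forb}$, and in particular every graph on at most $4$ vertices is sum-perfect. Since $\alpha(C_5)+\omega(C_5)=2+2=4<5$ and each proper induced subgraph of $C_5$ has at most $4$ vertices, this immediately yields $C_5\in\mathrm{Forb}$.

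It remains to show $C_5$ is the only $5$-vertex graph in $\mathrm{Forb}$. By the reduction above I need to show that $C_5$ is the only graph on $5$ vertices with $\alpha+\omega\leq 4$. By the same edgeless/complete argument $\alpha(G),\omega(G)\geq 2$, so $\alpha(G)=\omega(G)=2$; equivalently $G$ and $\overline{G}$ are both triangle-free. Pick any vertex $v\in V(G)$. Its neighbours in $G$ form a clique in $\overline{G}$ (otherwise two of them together with $v$ form a triangle in $G$), and this clique in $\overline{G}$ has size at most $2$ since $\overline{G}$ is triangle-free. The symmetric argument applied in $\overline G$ shows that $v$ has at most $2$ non-neighbours in $G$. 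Since $\deg_G(v)+|V(G)\setminus(\{v\}\cup N_G(v))|=4$, both numbers equal $2$. So $G$ is $2$-regular on $5$ vertices, forcing $G=C_5$.

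The argument is entirely by small case analysis; there is no serious obstacle. The only slightly substantive step is the identification of $C_5$ as the unique $5$-vertex graph with $G$ and $\overline{G}$ both triangle-free, which is the Ramsey-type regularity argument in the previous paragraph (and is otherwise the uniqueness of the extremal colouring witnessing $R(3,3)=6$).
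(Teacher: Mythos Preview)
Your proof is correct and follows essentially the same approach as the paper: reduce to $\alpha(G)=\omega(G)=2$ on five vertices, observe that both $G$ and $\overline{G}$ are triangle-free, and deduce $2$-regularity (the paper phrases this as ``degree $3$ or $4$ would create a triangle in $G$ or $\overline{G}$'', which is the same Ramsey-type step you give). Your treatment is slightly more explicit in verifying $C_5\in\mathrm{Forb}$ and in disposing of $n\le 4$, but the argument is the same.
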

\begin{proof}
Let $G \in \Forb_{\leq 5}$. Since complete graphs and empty graphs are sum-perfect, we may assume that $\alpha(G), \omega(G) \geq 2$. Hence $G$ must have $5$ vertices. Suppose that $\alpha(G) = \omega(G) = 2$ (otherwise $\alpha(G) + \omega(G) \geq 5 = |V(G)|$, a contradiction to $G \in \Forb$). No vertex in $G$ can have degree $3$ or $4$, since that would create a triangle either in $G$ or in $\overline{G}$. By the same argument, no vertex can have degree $3$ or $4$ in $\overline{G}$. Hence every vertex in $G$ has degree $2$. But $C_5$ is the only $2$-regular graph on $5$ vertices. 
\end{proof}

\begin{lemma}\label{lemma:6vertex}
We have that $\FF_6 \subseteq \mathrm{Forb}$.
\end{lemma}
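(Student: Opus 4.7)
The plan is to verify the two defining properties of a forbidden induced subgraph for each $G \in \FF_6$: (a)~$\alpha(G)+\omega(G) < |V(G)| = 6$, and (b)~every proper induced subgraph of $G$ is sum-perfect. By Lemma~\ref{lemma:5vertex}, since proper induced subgraphs of $G$ have at most $5$ vertices, condition~(b) is equivalent to saying that $G$ does not contain $C_5$ as an induced subgraph.

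For~(a), I would split the $24$ graphs into the two natural halves. The graphs $H_2,\ldots,H_{13}$ are bipartite on $6$ vertices with at least one edge, hence $\omega(H_i)=2$. Moreover each contains a perfect matching $M$ of size $3$, and any stable set meets each edge of $M$ in at most one vertex, so $\alpha(H_i) \leq 3$. Thus $\alpha(H_i)+\omega(H_i) \leq 5 < 6$. For the complements $H_{14},\ldots,H_{25}$, use $\alpha(\overline{H})=\omega(H)$ and $\omega(\overline{H})=\alpha(H)$ to get the same bound $\alpha(\overline{H_i})+\omega(\overline{H_i}) \leq 5$.

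For~(b), I would first observe that the bipartite graphs $H_2,\ldots,H_{13}$ contain no odd cycle, and in particular no induced $C_5$. For the complements $H_{14},\ldots,H_{25}$, I would use the self-complementarity $\overline{C_5}\cong C_5$: a graph contains $C_5$ as an induced subgraph if and only if its complement does. Since no $H_i$ (with $2\leq i \leq 13$) contains an induced $C_5$, neither does $\overline{H_i}$. Applying Lemma~\ref{lemma:5vertex} with $\Forb_{\leq 5}=\{C_5\}$, every proper induced subgraph of each graph in $\FF_6$ is sum-perfect.

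Combining (a) and (b), each graph in $\FF_6$ belongs to $\Forb$, which is the assertion. The only place where one has to be a little careful is in (a) for the complements, to verify that the bipartite graphs $H_2,\ldots,H_{13}$ really do all contain a perfect matching (this follows by inspection of Figure~\ref{allgraphs}, where the drawings make the matching visible); the rest of the argument is essentially automatic once Lemma~\ref{lemma:5vertex} is in hand, so I do not anticipate a genuine obstacle here.
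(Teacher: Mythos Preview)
Your proof is correct and follows essentially the same approach as the paper. The only cosmetic difference is that the paper invokes the closure of $\Forb$ under complementation once at the outset and then treats only the bipartite half, whereas you handle the complements explicitly via $\alpha(\overline{H})=\omega(H)$ and the self-complementarity $\overline{C_5}\cong C_5$; the underlying ideas are identical.
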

\begin{proof}
As $\Forb$ is closed under taking the complement, it suffices to prove that every $6$-vertex bipartite graph with a perfect matching is in $\mathrm{Forb}$. Let $G$ be such a graph. By the previous lemma, every proper induced subgraph of $G$ is sum-perfect. So it remains to prove that $\alpha(G) + \omega(G) < 6$. This is easy, since $\alpha(G) \leq 3$ (because $G$ has a perfect matching) and $\omega(G) = 2$ (because $G$ has no triangles). 
\end{proof}

\noindent Note that there are exactly~$12$ bipartite graphs on~$6$ vertices containing a perfect matching. To see this, start with $3K_2$. There are at most~$6$ additional edges, giving rise to the graphs $H_{2},...,H_{13}$ in Figure~$\ref{allgraphs}$.
%
\noindent One verifies that the 7-vertex graph~$H_{26}$ from Figure~$\ref{allgraphs}$ is in~$\Forb$. Its complement is then automatically forbidden as well. Hence, we have shown that $\FF \subseteq \Forb$.\\
\indent We will derive some properties of minimal non-sum-perfect graphs. The following lemma is key in many of the proofs that will follow. 

\begin{lemma}\label{lemma:removal}
Let $G \in \mathrm{Forb}$. Then for every $v \in V(G)$, we have $\alpha(G - v) = \alpha(G)$ and~$\omega(G - v) = \omega(G)$.
\end{lemma}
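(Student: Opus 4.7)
The plan is to show that the inequalities
\[
\alpha(G)-1 \le \alpha(G-v)\le \alpha(G),\qquad \omega(G)-1\le\omega(G-v)\le\omega(G)
\]
are all forced to equality by a pinching argument that compares the sum-perfect deficit of $G$ with the sum-perfect condition on $G-v$.

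First I would unpack what membership in $\Forb$ means. Since $G$ is a minimal non-sum-perfect graph, the witness of failure must be $G$ itself (any proper induced subgraph satisfies the defining inequality). So I record the key inequality
\[
\alpha(G)+\omega(G)\le |V(G)|-1.
\]
On the other hand, for any $v\in V(G)$, the graph $G-v$ is a proper induced subgraph of $G$, hence sum-perfect, so
\[
\alpha(G-v)+\omega(G-v)\ge |V(G)|-1.
\]

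Next I would observe the elementary one-step stability of $\alpha$ and $\omega$ under vertex deletion: if $S$ is a maximum stable set of $G$, then $S\setminus\{v\}$ is a stable set in $G-v$, so $\alpha(G-v)\ge \alpha(G)-1$; and trivially $\alpha(G-v)\le \alpha(G)$. The analogous bounds for $\omega$ follow by restricting a maximum clique.

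Finally I would chain the inequalities:
\[
|V(G)|-1 \;\le\; \alpha(G-v)+\omega(G-v) \;\le\; \alpha(G)+\omega(G) \;\le\; |V(G)|-1,
\]
forcing equality throughout. Combined with $\alpha(G-v)\le\alpha(G)$ and $\omega(G-v)\le\omega(G)$ individually, equality of the sum forces equality in each coordinate, yielding the conclusion. There is really no obstacle here; the content of the lemma is just the pigeonhole in the above two-sided squeeze, and the proof should fit in a few lines.
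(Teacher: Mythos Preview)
Your argument is correct and is essentially identical to the paper's proof: both use that $G-v$ is sum-perfect while $G$ is not to sandwich $\alpha(G-v)+\omega(G-v)$ and $\alpha(G)+\omega(G)$ between $|V(G)|-1$ and itself, then conclude coordinatewise equality from $\alpha(G-v)\le\alpha(G)$ and $\omega(G-v)\le\omega(G)$. The lower bounds $\alpha(G-v)\ge\alpha(G)-1$ and $\omega(G-v)\ge\omega(G)-1$ you record are true but not actually needed.
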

\begin{proof}
Let $v \in V(G)$. We know $H := G - v$ is sum-perfect, while~$G$ is not. Hence
$$
 \alpha(H) + \omega(H) \geq |V(H)| = |V(G)| -1 \geq \alpha(G) + \omega(G).
 $$ 
 As~$\omega(H) \leq \omega(G)$ and~$\alpha(H) \leq \alpha(G)$, this yields~$\omega(H) =\omega(G)$ and~$\alpha(H) = \alpha(G)$. 
\end{proof}

\begin{corollary}\label{corollary:n-1}
Let $G \in \mathrm{Forb}$. Then $\alpha(G)+\omega(G) = |V(G)|-1$.
\end{corollary}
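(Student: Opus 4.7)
The plan is to derive the equality by sandwiching $\alpha(G)+\omega(G)$ between two matching bounds: an upper bound coming from the definition of $\Forb$ applied to $G$ itself, and a lower bound coming from Lemma~\ref{lemma:removal}.

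For the upper bound, I would argue that since $G \in \Forb$, $G$ is itself not sum-perfect, so there exists an induced subgraph $H$ of $G$ with $\alpha(H)+\omega(H) < |V(H)|$. But every \emph{proper} induced subgraph of $G$ is sum-perfect by minimality, so this witnessing $H$ must be $G$ itself. Hence $\alpha(G)+\omega(G) < |V(G)|$, and since both sides are integers, $\alpha(G)+\omega(G) \leq |V(G)|-1$.

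For the matching lower bound, I would pick any vertex $v \in V(G)$ and apply Lemma~\ref{lemma:removal}, which gives $\alpha(G-v)=\alpha(G)$ and $\omega(G-v)=\omega(G)$. Since $G-v$ is a proper induced subgraph of $G$, it is sum-perfect, so $\alpha(G-v)+\omega(G-v) \geq |V(G-v)| = |V(G)|-1$. Substituting the equalities from Lemma~\ref{lemma:removal} yields $\alpha(G)+\omega(G) \geq |V(G)|-1$, and combining with the upper bound finishes the proof.

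There is no real obstacle here; the statement is essentially a one-line consequence of Lemma~\ref{lemma:removal} together with the defining property of $\Forb$. The only thing to be careful about is extracting the strict inequality $\alpha(G)+\omega(G) < |V(G)|$ (rather than the weak inequality one gets a priori from non-sum-perfectness) and using integrality to convert it into $\leq |V(G)|-1$.
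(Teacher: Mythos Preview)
Your proposal is correct and follows essentially the same argument as the paper: both combine the strict inequality $\alpha(G)+\omega(G) < |V(G)|$ from $G\in\Forb$ with Lemma~\ref{lemma:removal} applied to $G-v$ (which is sum-perfect) to sandwich $\alpha(G)+\omega(G)$ at $|V(G)|-1$. The paper presents it as a single chained inequality, but the content is identical.
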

\begin{proof}
As $G \in \Forb$, we know that $\alpha(G)+\omega(G) < |V(G)|$. Let $v \in V(G)$ and set $H := G-v$. Then $H$ is sum-perfect and we compute
$$
|V(G)| > \alpha(G) + \omega(G) = \alpha(H) + \omega(H) \geq |V(H)| = |V(G)|-1,
$$
where in the first equality we use Lemma \ref{lemma:removal}. The corollary now follows.
\end{proof}

\begin{lemma}\label{lemma:perfect}
Let $G \in \mathrm{Forb}_{n}$, with $n \geq 6$. Then $G$ is perfect.
\end{lemma}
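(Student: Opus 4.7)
The plan is to invoke the Strong Perfect Graph Theorem \cite{CRST}, which reduces the task to showing that $G$ contains no induced odd hole $C_{2k+1}$ ($k \geq 2$) and no induced odd antihole $\overline{C_{2k+1}}$ ($k \geq 2$). Since sum-perfectness, and hence membership in $\mathrm{Forb}$, is closed under complementation, and since complementation swaps odd holes with odd antiholes, it will be enough to rule out the odd holes in $G$; the antihole case then follows by applying the hole argument to $\overline{G} \in \mathrm{Forb}_n$.

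The ingredients I would collect first are: the $5$-cycle $H_1 = C_5$ and the path $H_6 = P_6$ both lie in $\FF \subseteq \mathrm{Forb}$ and are therefore not sum-perfect; and the elementary structural fact that for every $k \geq 3$, any six consecutive vertices of $C_{2k+1}$ induce a copy of $P_6$.

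Now suppose for contradiction that $G \in \mathrm{Forb}_n$ (with $n \geq 6$) contains an induced $C_{2k+1}$. If $k=2$, then since $|V(G)| = n \geq 6 > 5$, this induced $C_5$ is a \emph{proper} induced subgraph of $G$; but by definition of $\mathrm{Forb}$ every proper induced subgraph of $G$ must be sum-perfect, contradicting the fact that $C_5$ is not. If $k \geq 3$, then $n \geq 2k+1 \geq 7$, and the induced $P_6$ sitting inside the induced $C_{2k+1}$ has only six vertices and is therefore a proper induced subgraph of $G$, yielding the analogous contradiction with the non-sum-perfectness of $P_6$. Thus $G$ has no induced odd hole, and by the symmetry argument above no induced odd antihole either, so $G$ is perfect by the Strong Perfect Graph Theorem.

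I do not anticipate any real obstacle in this argument: all the heavy lifting is delegated to the Strong Perfect Graph Theorem, which is used as a black box, and the only case analysis needed is the trivial split between the hole $C_5$ (which is already forbidden directly) and longer odd holes (which contain the forbidden $P_6$). The minor point to watch is verifying that the forbidden subgraph found is \emph{proper}, but this follows immediately from the hypothesis $n \geq 6$ together with the size estimate $n \geq 2k+1$ in the case $k \geq 3$.
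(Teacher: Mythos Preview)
Your proof is correct and follows essentially the same approach as the paper: both invoke the Strong Perfect Graph Theorem after observing that $C_5$ and $P_6$ (and hence $\overline{P_6}$) cannot occur as proper induced subgraphs of a member of $\mathrm{Forb}_n$ with $n\geq 6$, and that every odd hole of length $\geq 7$ contains an induced $P_6$. Your use of the complementation symmetry of $\mathrm{Forb}$ to reduce the antihole case to the hole case is a slightly cleaner packaging than the paper's ``assume $G$ is neither $P_6$ nor $\overline{P_6}$'' split, but the underlying argument is the same.
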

\begin{proof}
As $G$ is in $\Forb$ and has at least $6$ vertices, $C_5$ is not an induced subgraph of $G$. Assume $G$ is neither $P_6$ nor its complement (otherwise we are done). Then it does not contain $C_n$ or $\overline{C_n}$ as an induced subgraph, for odd $n \geq 7$. Indeed, $C_n$ contains $P_6$ as an induced subgraph for $n \geq 7$, and $\overline{C_n}$ contains $\overline{P_6}$ as an induced subgraph for $n \geq 7$. By the strong perfect graph theorem, we are done.
\end{proof}

Let $G = (V,E)$ be a graph. A \textit{vertex cover} of $G$ is a subset $U \subseteq V$ such that $U \cap e \neq \emptyset$, for each $e \in E$. It is clear that $U \subseteq V$ is a vertex cover if and only if $V\setminus U$ is a stable set. Then 
\begin{equation}\label{equation:gallai}
\alpha(G) + \tau(G) = |V(G)|,
\end{equation}
where $\tau(G)$ denotes the \textit{vertex cover number}, i.e., the minimum size of a vertex cover. Let $\nu(G)$ denote the \textit{matching number}: it is the maximum size of a matching in $G$.

\begin{lemma}\label{lemma:6}
We have that $\mathrm{Forb}_{\leq 6} = \FF_{\leq 6}$.
\end{lemma}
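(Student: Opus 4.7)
The plan is to establish the nontrivial inclusion $\mathrm{Forb}_6 \subseteq \FF_6$; the reverse inclusion $\FF_6 \subseteq \mathrm{Forb}_6$ is Lemma~\ref{lemma:6vertex}, and the case $n\leq 5$ is Lemma~\ref{lemma:5vertex}. Fix $G \in \mathrm{Forb}_6$. By Corollary~\ref{corollary:n-1} we have $\alpha(G)+\omega(G)=5$, and since both $\mathrm{Forb}$ and $\FF_6$ are closed under complementation, it suffices to handle one graph from each complementary pair.

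First I would dispose of the degenerate cases: $\omega(G)=1$ would force $G$ edgeless and $\alpha(G)=6$, while $\alpha(G)=1$ would force $G=K_6$ and $\omega(G)=6$; both contradict $\alpha+\omega=5$. Hence $\alpha(G),\omega(G)\geq 2$, and together with $\alpha+\omega=5$ this forces $\{\alpha(G),\omega(G)\}=\{2,3\}$. Passing to the complement if necessary, assume $\omega(G)=2$ and $\alpha(G)=3$, so $G$ is triangle-free on six vertices with independence number three.

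Next I would invoke Lemma~\ref{lemma:perfect} to conclude that $G$ is perfect; combined with $\omega(G)=2$, this yields $\chi(G)=2$, i.e., $G$ is bipartite. Finally, from $\alpha(G)=3$ and~\eqref{equation:gallai} we get $\tau(G)=3$, and König's theorem applied to the bipartite graph $G$ gives $\nu(G)=\tau(G)=3$, i.e., $G$ admits a perfect matching. Thus $G$ is a bipartite graph on six vertices with a perfect matching, so $G \in \FF_6$, as desired.

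I do not anticipate any real obstacle here: all the heavy lifting is done in the preceding lemmas. The only moments requiring minor care are justifying that the case $(\omega(G),\alpha(G))=(3,2)$ is absorbed by complement-closure of both $\mathrm{Forb}$ and $\FF_6$, so that only the triangle-free case needs analysis, and the appeal to König's theorem, which is legitimate precisely because Lemma~\ref{lemma:perfect} (applicable since $n=6\geq 6$) has already forced $G$ to be bipartite.
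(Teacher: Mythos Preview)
Your proposal is correct and follows essentially the same route as the paper: reduce via Corollary~\ref{corollary:n-1} and complementation to $\omega(G)=2$, $\alpha(G)=3$, apply Lemma~\ref{lemma:perfect} to get bipartiteness, and then K\"onig's theorem with~\eqref{equation:gallai} to obtain a perfect matching. The only difference is cosmetic: you spell out the degenerate cases $\omega=1$ or $\alpha=1$ explicitly, whereas the paper dismisses them in one clause.
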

\begin{proof}
The ``$\supseteq$" inclusion follows from Lemmas \ref{lemma:5vertex} and \ref{lemma:6vertex}. To show ``$\subseteq$", let $G \in \Forb_{6}$. As $G$ is in $\Forb$, we have $\alpha(G), \omega(G) \geq 2$. By Corollary \ref{corollary:n-1}, without loss of generality we may assume that $\alpha(G) = 3$ and $\omega(G) = 2$. Equation (\ref{equation:gallai}) gives $\tau(G) = |V(G)| - \alpha(G) = 3$. Lemma \ref{lemma:perfect} implies that $G$ is perfect. Hence $\chi(G) = \omega(G) = 2$, showing that $G$ is bipartite. By K\"onig's theorem $\nu(G) = \tau(G) = 3$ (Theorem $2.1.1$ in \cite{RD}), therefore $G \in \FF_6$.
\end{proof}

\begin{lemma}\label{lemma:disconnected}
The only disconnected members of $\mathrm{Forb}$ are $3K_2, P_4 + K_2, C_4 + K_2, 2K_3 (=\overline{K_{3,3}})$. In particular, $G \in \mathrm{Forb}_{n}$, with $n \geq 7$, implies that $G$ is connected.
\end{lemma}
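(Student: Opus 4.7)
The plan is to suppose $G \in \Forb$ is disconnected and to show that $G$ must be one of the four graphs $3K_2$, $P_4 + K_2$, $C_4 + K_2$, $2K_3$; the ``in particular'' assertion then follows at once, since each of these four graphs has exactly $6$ vertices. Write $G = G_1 + G_2$, where $G_1$ is one connected component of $G$ and $G_2$ is the (possibly disconnected) union of the remaining components. Both $G_1$ and $G_2$ are proper induced subgraphs of $G$ and hence sum-perfect. I would first check that neither component is edgeless: if, say, $G_2$ were edgeless then $\omega(G_2) = 1$ and $\alpha(G_2) = |V(G_2)|$, and combining with $\omega(G) = \omega(G_1)$ and Corollary \ref{corollary:n-1} would give $\alpha(G_1) + \omega(G_1) = |V(G_1)| - 1$, contradicting sum-perfectness of $G_1$. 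Hence $\omega(G_1), \omega(G_2) \geq 2$.

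Next I would split into two cases. If both components contain a triangle, i.e.\ $\omega(G_1), \omega(G_2) \geq 3$, then taking one triangle from each component yields an induced $2K_3$ in $G$. Since $2K_3 = \overline{K_{3,3}}$ lies in $\FF_6 \subseteq \Forb$ by Lemma \ref{lemma:6vertex} and closure of sum-perfectness under complement, the minimality of $G$ in $\Forb$ forces $G = 2K_3$. Otherwise, at most one of $G_1, G_2$ has a triangle; assume without loss of generality that $G_2$ is triangle-free, so $\omega(G_2) = 2$. If $G \in \{3K_2, P_4 + K_2, C_4 + K_2\}$ we are done, so I may assume $G$ is none of these.

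The heart of the argument now goes as follows. Each of $3K_2, P_4 + K_2, C_4 + K_2$ lies in $\Forb$ by Lemma \ref{lemma:6vertex}, and $G$ is minimal in $\Forb$, so $G$ contains none of them as an induced subgraph. Combined with the presence of an edge in each of $G_1$ and $G_2$, this forces both $G_1$ and $G_2$ to be $\{2K_2, P_4, C_4\}$-free: an induced $2K_2$ in $G_i$ together with an edge of $G_{3-i}$ would yield an induced $3K_2$ in $G$, and analogously an induced $P_4$ or $C_4$ in $G_i$ combined with an edge of $G_{3-i}$ would give an induced $P_4 + K_2$ or $C_4 + K_2$. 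By the classical Chv\'{a}tal--Hammer characterization \cite{CH73}, both $G_1$ and $G_2$ are then threshold.

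Finally, Theorem \ref{theorem:threshold} gives $\alpha(G_i) + \omega(G_i) = |V(G_i)| + 1$ for $i=1,2$. Substituting this into $\alpha(G_1) + \alpha(G_2) + \max(\omega(G_1), \omega(G_2)) = |V(G)| - 1$ (from Corollary \ref{corollary:n-1}) and using $\omega(G_1) + \omega(G_2) = \max(\omega(G_1), \omega(G_2)) + \min(\omega(G_1), \omega(G_2))$, the equation reduces to $\min(\omega(G_1), \omega(G_2)) = 3$, contradicting $\omega(G_2) = 2$. The step I expect to require most care is the deduction of $\{2K_2, P_4, C_4\}$-freeness of each component from the minimality-based non-containment statements; this hinges crucially on the ``other'' component always supplying an edge, which is precisely what the very first step of the proof establishes.
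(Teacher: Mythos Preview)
Your proof is correct and follows essentially the same strategy as the paper: use minimality of $G$ together with $\{3K_2, P_4+K_2, C_4+K_2\}\subseteq\Forb$ to force each part of the decomposition to be $\{2K_2,P_4,C_4\}$-free, hence threshold by Chv\'atal--Hammer. The only noteworthy difference is the endgame: the paper observes that one of the two threshold pieces must be a star (else an induced $2K_3$ appears) and then concludes that $G$ is apex-threshold, hence sum-perfect, a contradiction; you instead handle the $2K_3$ case up front and finish with the numerical identity $\alpha(G_i)+\omega(G_i)=|V(G_i)|+1$ from Theorem~\ref{theorem:threshold} combined with Corollary~\ref{corollary:n-1}, which neatly yields $\min(\omega(G_1),\omega(G_2))=3$. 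Both closings are short; yours has the mild advantage of not invoking the (easy but unproved in the paper) fact that apex-threshold graphs are sum-perfect.
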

\begin{proof}
By Lemma \ref{lemma:6vertex} we have that $3K_2, P_4 + K_2, C_4 + K_2, 2K_3 \in \Forb$. Let $G$ be a disconnected graph in $\Forb$. Note that $G$ has no isolated vertices. Suppose $G$ has at least $3$ components. Each component must have an edge, and hence~$G$ contains $3K_2$. Thus the only graph in $\Forb$ with more than $2$ components is $3K_2$. Now, let $G$ be a graph in $\Forb$ with exactly $2$ components. Suppose one of the components is not a threshold graph. Then it contains either $P_4, C_4$, or $2K_2$ (see \cite{CH73}). Together with an edge in the other component, we get one of $3K_2, P_4 + K_2, C_4 + K_2$. Hence both the components are threshold graphs. One of them must be a star, for otherwise we obtain $2K_3$. Hence $G$ is the disjoint union of a threshold graph with a star, which is apex-threshold, and hence sum-perfect, a contradiction. 
\end{proof}

\noindent Before we move on to the case $n=7$, we need a lemma.

\begin{lemma}\label{lemma:outercase}
Let $G \in \mathrm{Forb}_{n}$, with $n \geq 7$. Then $\omega(G) \neq 2$ (and hence also $\alpha(G) \neq 2$).
\end{lemma}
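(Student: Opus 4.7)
The plan is to derive a contradiction from assuming $\omega(G) = 2$ by showing that $G$ would be a triangle-free bipartite graph in which every vertex lies in every maximum matching, which is impossible once $n \geq 7$.

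First I would suppose for contradiction that $\omega(G) = 2$. Then $G$ is triangle-free. Since $n \geq 7$, Lemma \ref{lemma:perfect} applies and $G$ is perfect, so $\chi(G) = \omega(G) = 2$, i.e., $G$ is bipartite. By Corollary \ref{corollary:n-1} we have $\alpha(G) + \omega(G) = n - 1$, which forces $\alpha(G) = n-3$. Combining this with the Gallai-type identity (\ref{equation:gallai}) yields $\tau(G) = n - \alpha(G) = 3$, and since $G$ is bipartite, K\"onig's theorem gives $\nu(G) = \tau(G) = 3$.

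Next I would apply Lemma \ref{lemma:removal} to each vertex $v \in V(G)$: it gives $\alpha(G-v) = \alpha(G) = n-3$ and $\omega(G-v) = 2$. Since $G - v$ is still bipartite, the same computation yields $\tau(G-v) = (n-1)-(n-3) = 2$ and $\nu(G-v) = 2$. This means that every maximum matching of $G$ (of size $3$) is destroyed by the removal of $v$, hence \emph{every} vertex of $G$ is covered by \emph{every} maximum matching of $G$.

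But any matching of size $3$ covers only $6$ vertices, so if we fix one maximum matching $M$ of $G$, then since $|V(G)| = n \geq 7$ there exists a vertex $v \in V(G)$ not covered by $M$. Then $M$ remains a matching of size $3$ in $G - v$, contradicting $\nu(G-v) = 2$. This contradiction shows $\omega(G) \neq 2$. Finally, since the class of sum-perfect graphs (and hence $\mathrm{Forb}$) is closed under complementation, the same argument applied to $\overline{G} \in \mathrm{Forb}_n$ yields $\alpha(G) = \omega(\overline{G}) \neq 2$. The whole argument is short; the only potential snag is checking that $G-v$ is still bipartite (immediate, as induced subgraphs of bipartite graphs are bipartite) so that König's theorem applies, and the counting step at the end is what makes $n \geq 7$ essential.
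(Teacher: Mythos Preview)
Your proof is correct and follows essentially the same approach as the paper: assume $\omega(G)=2$, use perfection to deduce bipartiteness, compute $\nu(G)=\tau(G)=3$ via K\"onig, and then obtain a contradiction from Lemma~\ref{lemma:removal} by removing a vertex missed by a maximum matching (which exists once $n\geq 7$). The only cosmetic difference is that the paper first invokes Lemma~\ref{lemma:disconnected} to record that $G$ is connected and to pin down the colour-class sizes, whereas you bypass this and work directly with $\nu$ and $\tau$; your route is slightly leaner since connectedness is not actually needed for the contradiction.
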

\begin{proof}
Assume that $\omega(G) = 2$. Then $\alpha(G) = n-3$ by Corollary \ref{corollary:n-1}. By Lemma \ref{lemma:perfect}, $G$ is perfect. Hence $\chi(G) = \omega(G) = 2$, showing that $G$ is bipartite. Lemma \ref{lemma:disconnected} implies that $G$ is connected. As $\alpha(G) = n-3$, one of its color classes then has size $n-3$ and the other has size $3$. By K\"onig's theorem, $\nu(G) =3$. As $n \geq 7$, removing any vertex not in the edges of a maximum size matching results in a graph $G'$ with $\alpha(G') = n-4$, contradicting Lemma \ref{lemma:removal}.
\end{proof}

\begin{lemma}\label{lemma:7}
We have that $\mathrm{Forb}_7 = \FF_7$.
\end{lemma}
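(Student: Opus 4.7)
The plan is to combine the preceding lemmas to pin $G$ down tightly and then finish by a case analysis. The inclusion $\FF_7\subseteq\Forb_7$ follows from the already established fact that $\FF\subseteq\Forb$, so only the reverse inclusion requires work. Let $G\in\Forb_7$. Lemma~\ref{lemma:disconnected} gives that $G$ is connected; Lemma~\ref{lemma:perfect} that $G$ is perfect; Corollary~\ref{corollary:n-1} that $\alpha(G)+\omega(G)=6$; and Lemma~\ref{lemma:outercase} that neither $\alpha(G)$ nor $\omega(G)$ equals $2$. Since both parameters are at least $2$ (otherwise $G$ would be trivially sum-perfect), this forces $\alpha(G)=\omega(G)=3$. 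Lemma~\ref{lemma:removal} then yields $\alpha(G-v)=\omega(G-v)=3$ for every $v\in V(G)$, and by minimality of $G$ in $\Forb$ combined with $\FF\subseteq\Forb$, every proper induced subgraph of $G$ is $\FF_{\leq 6}$-free; in particular $G$ itself is $\FF_{\leq 6}$-free. Since $\Forb$ is closed under complementation, I may freely swap $G$ for $\overline G$ during the analysis.

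Next I would exploit the perfectness of both $G$ and $\overline G$ with $\chi=\omega=3$ on each side: $V(G)$ admits a partition into three stable sets and, independently, a partition into three cliques, with all parts of size at most $3$. The only compositions of $7$ into three parts bounded by $3$ are $(3,3,1)$ and $(3,2,2)$, so there are very few skeletal types to consider. I would fix a maximum clique $T$ and a maximum stable set $S$; since $|T\cap S|\leq 1$ one has $|T\cup S|\in\{5,6\}$, and the one or two remaining vertices, together with the edges between $T$, $S$, and those vertices, produce a finite list of candidate adjacency patterns. Each candidate is pruned by three forcing principles: no edge may create a $K_4$ (since $\omega(G)=3$); no non-edge may create a stable set of size $4$ (since $\alpha(G)=3$); and no induced copy of any member of $\FF_{\leq 6}$ may occur. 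The constraints coming from $3K_2$, $2K_3$, $P_6$, $\overline{P_6}$, $C_4+K_2$ and its complement, and $K_{3,3}$, $K_{3,3}-e$ and their complements are particularly restrictive on a $7$-vertex graph with $\alpha=\omega=3$. As a final filter, the equalities $\alpha(G-v)=\omega(G-v)=3$ for all $v$ eliminate any candidate in which some vertex lies in every maximum clique or in every maximum stable set.

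I expect the main obstacle to be organising the case analysis so that the number of skeletal adjacency patterns remains manageable. The $\FF_{\leq 6}$-freeness should collapse the bulk of the cases very quickly: for instance $3K_2$-freeness imposes a high edge-density between any two disjoint edges, and complementarily $2K_3$-freeness imposes a low edge-density between any two disjoint non-edges. Once these symmetric constraints are exhausted, the local structure around the fixed triangle $T$ and stable triple $S$ becomes rigid, and the only seven-vertex graphs satisfying all the constraints should be precisely $H_{26}$ and $H_{27}$.
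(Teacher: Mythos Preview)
Your setup is correct and matches the paper exactly: both arguments reduce to showing that a connected, perfect, $\FF_{\leq 6}$-free graph $G$ on seven vertices with $\alpha(G)=\omega(G)=3$ and $\alpha(G-v)=\omega(G-v)=3$ for all $v$ must be $H_{26}$ or $H_{27}$.

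Where you diverge is in the method of pinning down $G$. You propose to anchor on a fixed maximum clique $T$ and stable set $S$ and enumerate the adjacencies of the remaining one or two vertices, pruning by $\FF_{\leq 6}$-freeness and the per-vertex stability conditions. The paper instead passes to the complement if necessary to assume $|E(G)|\leq 10$, and then works entirely with the \emph{triangle structure}: the conditions $\omega(G-v)=3$ and $\alpha(G-v)=3$ translate into ``no vertex lies in every triangle'' and ``no two vertex-disjoint triangles'', and a short degree-sum computation (using $2|E|\leq 20$ together with $K_4$-freeness to show $d_\Delta(v)<d(v)$) forces $G$ to have exactly four triangles. A brief analysis of how four triangles can overlap under these constraints yields a unique six-vertex skeleton, and the seventh vertex has essentially one legal place to attach, giving $H_{26}$.

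Your route is not wrong in principle, but you have not carried out the case analysis, and you yourself flag its organisation as the main obstacle. Without the edge bound $|E|\leq 10$ (or some analogous global constraint) the raw enumeration of adjacency patterns between $T$, $S$, and the leftover vertices is substantial, and leaning on $\FF_{\leq 6}$-freeness graph-by-graph is ad hoc. The paper's triangle-counting argument is precisely what collapses the case analysis: once you know there are exactly four triangles, no vertex in all of them, no two disjoint, and two of them sharing exactly one vertex, the configurations reduce to three small pictures, only one of which survives. That counting step is the idea your plan is missing.
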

\begin{proof}
The ``$\supseteq$" inclusion is immediately verified. To show ``$\subseteq$", let $G = (V,E) \in \Forb_7$. By Lemma \ref{lemma:disconnected}, $G$ is connected. By going to the complement if necessary, 
\begin{equation}\label{equation:prop0}
|E| \leq 10.
\end{equation}
Then we must prove that $G = H_{26}$, from Figure \ref{allgraphs}. Corollary \ref{corollary:n-1} and Lemma \ref{lemma:outercase} show that 
\begin{equation}\label{equation:prop1}
\alpha(G) = \omega(G) = 3. 
\end{equation}
Hence there is a triangle in $G$. If there is a vertex $v \in V$ that is contained in all triangles of $G$, then $\omega(G-v) < \omega(G)$, contradicting Lemma \ref{lemma:removal}. Hence 
\begin{align}\label{equation:prop2}
\text{no vertex is contained in all triangles (hence by (\ref{equation:prop1}) there is no dominating vertex).}
\end{align}
If there are two vertex-disjoint triangles and if $v$ denotes the vertex not in the vertex-disjoint triangles, then $\alpha(G-v) < \alpha(G)$, contradicting again Lemma \ref{lemma:removal}. Hence 
\begin{align}\label{equation:prop3}
\text{there are no two vertex-disjoint triangles.}
\end{align}
\indent We introduce some notation. For $v \in V$, let $N(v)$ denote the set of neighbors of $v$. By $N[v]$ we denote the set $N(v) \cup \{v\}$. For $v \in V$, let $d_{\Delta}(v)$ denote the number of triangles in which $v$ is contained and let $d(v)$ denote the degree of $v$.\\\\
\noindent \textit{Claim $1$: For $v \in V$, $d_{\Delta}(v) \leq 3$.}\\
\noindent Suppose $d_{\Delta}(v) = 4.$ Clearly then $d(v) > 3$. If $d(v) = 4$, then $N(v)$ induces a $4$-cycle and in $G\setminus N[v]$ there are two more vertices and at most two more edges. We see that $v$ is in every triangle, contradicting (\ref{equation:prop2}). Suppose $d(v) = 5$. The graph induced by $N[v]$ has $9$ edges, and again we see that $v$ is in every triangle, a contradiction. \\
\indent Suppose $d_{\Delta}(v)  \geq 5.$ Then $d(v) >  4$, so $d(v) = 5$ (as there is no dominating vertex by (\ref{equation:prop2})). The graph induced by $N[v]$ has at least $10$ edges, and so the vertex not in $N[v]$ is isolated, contradicting connectedness of $G$. This proves the claim.\\\\
\noindent \textit{Claim $2$: For $v \in V$, $d_{\Delta}(v) < d(v)$.}\\
\noindent Let $d_{\Delta}(v) = i$. By claim $1$ we know $i \leq 3$. If $i=0$ the claim follows from the fact that $G$ is connected. The cases $i=1$ and $i=2$ are clear. The case $i=3$ follows from $G$ being $K_4$-free (because of (\ref{equation:prop1})), thus proving the claim.\\\\
\noindent \textit{Claim $3$: $G$ has at most $4$ triangles.}\\
We have $20 \geq 2|E| = \sum_{v \in V}d(v) \geq  \sum_{v \in V}(d_{\Delta}(v) + 1).$ Here we are using (\ref{equation:prop0}), the degree-sum formula and claim $2$. We conclude that  $3t(G) = \sum_{v \in V}d_{\Delta}(v) \leq 13$, where $t(G)$ is the number of triangles in $G$. Hence $G$ has at most $4$ triangles. \\\\
\noindent \textit{Claim $4$: $G$ has at least $4$ triangles.}\\
By (\ref{equation:prop2}) and (\ref{equation:prop3}) $G$ cannot have precisely $1$ triangle or precisely $2$ triangles. Suppose there are precisely $3$ triangles. Start with two triangles having a vertex $v$ in common. A third triangle cannot contain the vertex $v$ because of (\ref{equation:prop2}), and hence must contain a vertex $x$ from one triangle, and a vertex $y$ from the other triangle. But then we automatically create a fourth triangle $\Delta = vxy$, in contradiction with the fact that there were supposed to be only $3$ triangles.\\
\indent Next we consider the case that we start with $2$ triangles having two vertices $u$ and $v$ in common. By (\ref{equation:prop2}) the third triangle cannot contain $u$ or $v$, hence must contain a vertex $x$ from one triangle and a vertex $y$ from the other triangle. But then $\{x,y,u,v\}$ induces $K_4$, contradicting (\ref{equation:prop1}). \\\\
\noindent \textit{Claim $5$: There are two triangles in $G$ that have exactly one vertex in common.}\\
Assume, to the contrary, that every pair of triangles in $G$ has two vertices in common (here we use (\ref{equation:prop2})). Let $\Delta_1 = abc$ and $\Delta_2 = abd$ be two triangles, with $a,b,c,d \in V$ and $c \neq d$, having the vertices $a$ and $b$ in common. By (\ref{equation:prop1}), $cd \notin E$. Then any triangle in $G$ contains at most one of $c,d$. Therefore, by our assumption that every pair of triangles has two vertices in common, any triangle must contain both $a$ and $b$. This contradicts (\ref{equation:prop2}). The claim follows.\\\\
\noindent By claims $3$ and $4$ there are exactly $4$ triangles in $G$. By claim $5$ there are two triangles that have exactly one vertex $v$ in common. A third triangle must share a vertex with both triangles, leading (up to relabeling the vertex $v$) to the following three possibilities:
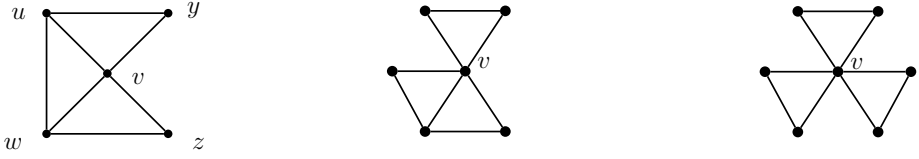
\begin{figure}[H]
\centering
\begin{subfigure}{0.3\linewidth}
  \scalebox{0.8}{
\begin{tikzpicture}
\SetGraphUnit{1}
\SetVertexNoLabel
\tikzset{VertexStyle/.style = {shape = circle,fill = black,minimum size = 4pt,inner sep=0pt}}
\Vertex{a}
\NOEA(a){b}
\NOWE(a){c}
\SOWE(a){d}
\SOEA(a){e}
\draw[style={-,thick,color=black}] (a)--(b)node[pos=0.8,anchor=south]{\hspace{13mm}{\large$y$}};
\draw[style={-,thick,color=black}] (a)--(c)node[pos=-0.4,anchor=south]{\hspace{4mm}{\large$v$}};
\draw[style={-,thick,color=black}] (b)--(c)node[pos=1.0,anchor=east]{\hspace{-10mm}{\large$u$}};
\draw[style={-,thick,color=black}] (a)--(d)node[pos=1.5,anchor=south]{\hspace{-3mm}{\large$w$}};
\draw[style={-,thick,color=black}] (a)--(e)node[pos=1.5,anchor=south]{\hspace{2mm}{\large$z$}};
\draw[style={-,thick,color=black}] (d)--(e)node[pos=0.5,anchor=north]{};
\draw[style={-,thick,color=black}] (c)--(d)node[pos=0.5,anchor=north]{};

\end{tikzpicture}}
\end{subfigure}
\begin{subfigure}{0.3\linewidth}
  \scalebox{0.8}{
\begin{tikzpicture}
\tikzset{VertexStyle/.style = {shape = circle,fill = black,minimum size = 5pt,inner sep=0pt}}
\SetVertexNoLabel
\Vertex{a}
\Vertex[x=-1.2,y=-0.0]{f}
\Vertex[x=-.66,y=-1]{g}
\Vertex[x=.66,y=1]{b}
\Vertex[x=-.66,y=1]{c}
\Vertex[x=.66,y=-1]{d}
\draw[style={-,thick,color=black}] (a)--(b)node[pos=-.2,anchor=south]{\hspace{7.5mm}{\large$v$}};
\draw[style={-,thick,color=black}] (a)--(c)node[pos=0.9,anchor=south]{};
\draw[style={-,thick,color=black}] (b)--(c)node[pos=0.5,anchor=north]{};
\draw[style={-,thick,color=black}] (a)--(d)node[pos=1.4,anchor=south]{};
\draw[style={-,thick,color=black}] (a)--(f)node[pos=0.5,anchor=north]{};
\draw[style={-,thick,color=black}] (a)--(g)node[pos=0.5,anchor=north]{};
\draw[style={-,thick,color=black}] (f)--(g)node[pos=0.5,anchor=north]{};
\draw[style={-,thick,color=black}] (d)--(g)node[pos=0.5,anchor=north]{};
\end{tikzpicture}}
\end{subfigure}
\begin{subfigure}{0.2\linewidth}
  \scalebox{0.8}{
\begin{tikzpicture}
\tikzset{VertexStyle/.style = {shape = circle,fill = black,minimum size = 5pt,inner sep=0pt}}
\SetVertexNoLabel
\Vertex{a}
\Vertex[x=-1.2,y=-0.0]{f}
\Vertex[x=-.66,y=-1]{g}
\Vertex[x=.66,y=1]{b}
\Vertex[x=-.66,y=1]{c}
\Vertex[x=.66,y=-1]{d}
\Vertex[x=1.2,y=-.0]{e}
\draw[style={-,thick,color=black}] (a)--(b)node[pos=-.2,anchor=south]{\hspace{7.5mm}{\large$v$}};
\draw[style={-,thick,color=black}] (a)--(c)node[pos=0.9,anchor=south]{};
\draw[style={-,thick,color=black}] (b)--(c)node[pos=0.5,anchor=north]{};
\draw[style={-,thick,color=black}] (a)--(d)node[pos=1.4,anchor=south]{};
\draw[style={-,thick,color=black}] (a)--(e)node[pos=0.5,anchor=south]{};
\draw[style={-,thick,color=black}] (d)--(e)node[pos=0.5,anchor=north]{};
\draw[style={-,thick,color=black}] (a)--(f)node[pos=0.5,anchor=north]{};
\draw[style={-,thick,color=black}] (a)--(g)node[pos=0.5,anchor=north]{};
\draw[style={-,thick,color=black}] (f)--(g)node[pos=0.5,anchor=north]{};
\end{tikzpicture}}
\end{subfigure}
\vspace{0mm}
\caption{From left to right: possibilities $\textrm{I}$, $\textrm{II}$ and $\textrm{III}$} 
\end{figure}
\noindent In possibilities $\textrm{II}$ and $\textrm{III}$ the fourth triangle must contain $v$, contradicting (\ref{equation:prop2}). In possibility $\textrm{I}$ the fourth triangle cannot contain the vertex $v$, and hence must contain the vertices $u$ and $w$. The fourth triangle does not contain $y$ or $z$, since that would create a copy of $K_4$. Hence the picture looks like:
\begin{figure}[H]
\centering
  \scalebox{0.6}{
\begin{tikzpicture}
\SetGraphUnit{1}
    \tikzset{VertexStyle/.style = {shape = circle,fill = black,minimum size = 6pt,inner sep=0pt}}
\SetVertexNoLabel
\Vertex{a}
\Vertex[x=1,y=1.5]{b}
\Vertex[x=2,y=3]{c}
\Vertex[x=3,y=1.5]{d}
\Vertex[x=2,y=0]{e}
\Vertex[x=4,y=0]{f}
\draw[style={-,thick,color=black}] (a)--(e)node[pos=0,anchor=east]{\hspace{-10mm}{\Large$x$}};
\draw[style={-,thick,color=black}] (a)--(b)node[pos=1.1,anchor=east]{\hspace{-8mm}{\Large$u$}};
\draw[style={-,thick,color=black}] (b)--(c)node[pos=1.25,anchor=east]{\hspace{-2mm}{\Large$y$}};
\draw[style={-,thick,color=black}] (b)--(d)node[pos=1.1,anchor=west]{\hspace{1mm}{\Large$v$}};
\draw[style={-,thick,color=black}] (b)--(e)node[pos=1.3,anchor=north]{\hspace{-3mm}{\Large$w$}};
\draw[style={-,thick,color=black}] (d)--(e)node[pos=0.5,anchor=north]{};
\draw[style={-,thick,color=black}] (e)--(f)node[pos=1,anchor=west]{\hspace{3mm}{\Large$z$}};
\draw[style={-,thick,color=black}] (c)--(d)node[pos=0.5,anchor=north]{};
\draw[style={-,thick,color=black}] (d)--(f)node[pos=0.5,anchor=north]{};
\end{tikzpicture}}
\vspace{0mm}
\caption{}\label{figure:4} 
\end{figure}
\noindent Let $a$ denote the seventh vertex of $G$. By connectedness and by (\ref{equation:prop0}), $a$ is adjacent to precisely one vertex. If $a$ is adjacent to one of $\{u,v,w\}$, then $\alpha(G) = 4$, contradicting (\ref{equation:prop1}). Hence $a$ is adjacent to one of $\{x,y,z\}$, yielding the graph $H_{26}$ from Figure \ref{allgraphs}.
\end{proof}

\subsection{Non-existence of forbidden graphs with at least $8$ vertices}\label{section:inclusion2}

 In this section we complete the proof of Theorem~\ref{27th}. In order to do so, we show that $\Forb_{n}$ is empty for $n \geq 8$. If $G = (V,E)$ is a graph, $U \subseteq V$ a subset of vertices and $x \in V$, then $N_U(x) := \{v \in U \mid xv \in E\}$. We write $\text{deg}_U(x) := |N_U(x)|$. 

\begin{lemma}\label{lemma:max2}
Let $G \in \mathrm{Forb}$. Then $\mathrm{max}\{\alpha(G),\omega(G)\} \leq 3$.
\end{lemma}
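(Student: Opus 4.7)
By the complement closure of $\Forb$, it suffices to prove $\alpha(G) \leq 3$ for every $G \in \Forb$. The case $|V(G)| \leq 7$ is immediate: Lemma \ref{lemma:7} yields $G \in \FF$, and direct inspection of the graphs displayed in Figure \ref{allgraphs} shows $\alpha(H) \leq 3$ for every $H \in \FF$. For the substantive case $n := |V(G)| \geq 8$, I first observe that $G$ itself is $\FF$-free: any induced copy of some $H \in \FF$ in $G$ would necessarily be a proper induced subgraph (since $|V(H)| \leq 7 < n$), hence sum-perfect by minimality of $G \in \Forb$; yet $H \in \FF \subseteq \Forb$ is not sum-perfect, a contradiction.

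Suppose toward a contradiction that $\alpha(G) \geq 4$. Fix a maximum stable set $S$ with distinguished elements $s_1, s_2, s_3, s_4 \in S$ and set $T := V(G) \setminus S$. By maximality of $S$, every $t \in T$ has $N_S(t) \neq \emptyset$. For each $i \in \{1,2,3,4\}$, define the ``single-neighbor'' set $A_i := \{t \in T : N_S(t) \cap \{s_1, s_2, s_3, s_4\} = \{s_i\}\}$. In the generic situation where at least three of the $A_i$ are nonempty---say $A_1, A_2, A_3$---choose $t_i \in A_i$ and inspect the induced subgraph on $\{s_1, t_1, s_2, t_2, s_3, t_3\}$. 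Depending on whether $\{t_1, t_2, t_3\}$ spans $0, 1, 2,$ or $3$ edges, this subgraph is $3K_2 = H_2$, $P_4 + K_2 = H_3$, $S_{1,2,2} = H_4$, or the net graph, respectively. The first three outcomes immediately contradict $\FF$-freeness.

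The remaining obstacles are the net (triangle) case and the degenerate case in which fewer than three $A_i$ are nonempty. For the net case, connectedness of $G$ (Lemma \ref{lemma:disconnected}) provides $t_4 \in T$ adjacent to $s_4$; replaying the above four-way case analysis on each triple $(t_i, t_j, t_4)$ with $\{i,j\} \subset \{1,2,3\}$ either exhibits an induced copy of $H_2, H_3$, or $H_4$, or forces $t_4$ to be adjacent to each of $t_1, t_2, t_3$, so that $\{t_1, t_2, t_3, t_4\}$ induces $K_4$ and $\omega(G) \geq 4$. Combined with Corollary \ref{corollary:n-1}, this gives $n \geq 9$, and one finishes by exhibiting an induced $H_{26}$ (or $H_{27}$ after passing to $\overline{G}$, where $\alpha(\overline{G}) \geq 4$) in the neighborhood structure around the $K_4$ and $S$. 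The degenerate case is handled by a counting argument from Corollary \ref{corollary:n-1} and Lemma \ref{lemma:removal}: when fewer than three of the $A_i$ are nonempty, most vertices of $T$ must have $|N_S(\cdot)| \geq 2$, and the bipartite subgraph between such $T$-vertices and $\{s_1, \ldots, s_4\}$ is forced to contain one of the 6-vertex bipartite obstructions $H_5, \ldots, H_{13}$. The crux of the whole proof is the triangle/net case: the 7-vertex graphs $H_{26}$ and $H_{27}$ in $\FF$ are present precisely to close off this escape route, and tracking how they arise requires the most delicate bookkeeping.
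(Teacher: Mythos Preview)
Your approach is genuinely different from the paper's: you fix only a maximum stable set $S$ and hunt for a forbidden induced subgraph among vertices with few neighbours in $S$, whereas the paper fixes a maximum clique $M$ alongside $S$ and repeatedly derives contradictions from Lemma~\ref{lemma:removal} by deleting well-chosen vertices of $M$ or $S$. Your opening case split (on the number of edges among $t_1,t_2,t_3$) is clean, but the two remaining cases are not actually proved.

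In the net case, the sentence ``one finishes by exhibiting an induced $H_{26}$'' is unsupported. First, connectedness only gives a vertex $t_4\sim s_4$; it does not give $t_4\in A_4$, and if $t_4$ is also adjacent to some $s_i$ with $i\le 3$ your ``replay'' of the four-way analysis on $\{s_i,t_i,s_j,t_j,s_4,t_4\}$ no longer applies. Second, even granting $t_4\in A_4$ and hence a $K_4$ on $\{t_1,t_2,t_3,t_4\}$ with pendants $s_1,\dots,s_4$, that eight-vertex graph is sum-perfect (indeed $\alpha=\omega=4$), hence $\FF$-free; no $H_{26}$ appears inside it, so further vertices of $G$ and additional argument are required. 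Passing to $\overline{G}$ does not help: you have just shown $\omega(G)\ge 4$, i.e.\ $\alpha(\overline{G})\ge 4$, and the same reasoning in $\overline{G}$ reproduces the identical impasse rather than producing $H_{27}$.

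In the degenerate case, the claim that the bipartite graph between high-degree $T$-vertices and $\{s_1,\dots,s_4\}$ ``is forced to contain one of $H_5,\dots,H_{13}$'' is a bare assertion. Moreover, when $|S|>4$ a vertex $t\in T$ may satisfy $N_S(t)\cap\{s_1,\dots,s_4\}=\emptyset$, so ``fewer than three $A_i$ nonempty'' does not by itself force many $t$ to have two or more neighbours in $\{s_1,\dots,s_4\}$; the invocation of Corollary~\ref{corollary:n-1} and Lemma~\ref{lemma:removal} is gestured at but not carried out. The paper sidesteps all of these difficulties precisely by keeping $M$ in play from the start, which is what makes Lemma~\ref{lemma:removal} bite.
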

\begin{proof}
Let $G \in \Forb$. Then $\alpha(G)+ \omega(G) = |V(G)|-1$ by Corollary \ref{corollary:n-1}. We may assume, by going to the complement if necessary, that $\alpha(G) \geq \omega(G)$. We also may assume that $\alpha(G) \geq 4$, as there is nothing to prove if this were false. Then it follows that $G$ is $\FF$-free. Indeed, if $G$ contains a graph $H$ from $\FF$ as an induced subgraph, then by minimality $G = H$, contradicting the fact that $\alpha(H) \leq 3$ for all $H \in \FF$. We distinguish two cases: 
\begin{enumerate}
\item There is a maximum size clique $M$ and a maximum size stable set $S$ such that $M \cap S = \emptyset$.
\item For every maximum size clique $M$ and maximum size stable set $S$, $M \cap S \neq \emptyset$. 
\end{enumerate}
\noindent \textit{Case $1$}\\\\
\noindent We have $|M| + |S| = \omega(G) + \alpha(G) = |V(G)|-1$. Let $x$ be the vertex in $V(G) \setminus (M \cup S)$. If $x$ is not adjacent to any vertex in $S$, then $S \cup \{x\}$ is a stable set of size $\alpha(G) +1$. Hence, $x$ is adjacent to a vertex in $S$. We consider the cases: $|N_S(x)| = 1, |N_S(x)| = 2$ and $|N_S(x)| \geq 3$, beginning with the latter.\\

\noindent \textit{$1.1$ $|N_S(x)| \geq 3$}\\\\
\noindent Let $s \in S$. By the assumption on the cardinality of $N_S(x)$, a maximum size stable set in $G - s$ does not contain the vertex $x$. Hence it must contain a vertex $m_s \in M$. This implies that $m_s$ is not adjacent to any vertex in $S\setminus\{s\}$, but is adjacent to $s$ (otherwise $S\cup\{m_s\}$ is a stable set of size $\alpha(G)+1$). This in turn implies that $m_s \neq m_{s'}$, if $s$ and $s'$ are distinct vertices in $S$. Hence, $\omega(G) \geq \alpha(G)$ and therefore $\alpha(G) = \omega(G) \geq 4$.\\
\indent Let $m$ be a non-neighbor of $x$ in $M$. It exists, as otherwise $M \cup \{x\}$ induces a clique of size $\omega(G)+1$. Let $m' \in M\setminus \{m\}$. As $\text{deg}_M(s) = 1$ for all $s \in S$, a maximum size clique in $G-m'$ cannot contain a vertex from $S$ (here we use that $\omega(G) \geq 4$). As the vertex $x$ is not adjacent to $m$, the vertices $(M\setminus m')\cup \{x\}$ do not form a clique. Then $\omega(G-m') < \omega(G)$, contradicting Lemma \ref{lemma:removal}. Hence, this case cannot occur.\\

\noindent \textit{$1.2$ $|N_S(x)| = 2$}\\\\
\noindent Let $N_S(x) = \{a,b\}$ and let $s \in S\setminus \{a,b\}$. If a stable set in $G-s$ contains $x$, then it cannot contain $a$ and $b$ and hence it is not of maximum size. Therefore, a maximum size stable set in $G-s$ does not contain $x$, but contains a vertex $m_s \in M$. As in the previous subcase, $m_s$ is non-adjacent to every vertex in $S\setminus \{s\}$, but is adjacent to $s$. Furthermore, $m_s \neq m_{s'}$, if $s$ and $s'$ are distinct vertices in $S$, showing that $\omega(G) \geq \alpha(G)-2$. \\
\indent As $b$ and $x$ are adjacent, a maximum size stable set in $G-a$ necessarily contains a vertex $m_a \in M$. The vertex $m_a$ is non-adjacent to every vertex in $S\setminus \{a,b\}$. Hence $m_a \neq m_s$, for each $s \in S\setminus \{a,b\}$. This gives $\omega(G) \geq \alpha(G)-1$ and also that 
\begin{equation}\label{equation:degreebound}
\text{deg}_M(s) \leq 2,
\end{equation}
for $s \in S\setminus\{a,b\}$. The vertex $m_a$ is non-adjacent to at least one of $b,x$, as one of $b,x$ is contained in a maximum size stable set in $G-a$ that contains $m_a$. This also implies that $m_a$ is adjacent to a vertex in $\{a,b\}$ (otherwise $S\cup\{m_a\}$ is a stable set of size $\alpha(G)+1$). Without loss of generality, assume that $m_a$ is adjacent to $a$. We make a case distinction.\\

\noindent \textit{$1.2.1$ $m_a$ is not adjacent to $b$, but it is adjacent to $x$}\\\\
\noindent Let $S'$ be a maximum size stable set in $G-b$. Suppose $S'$ contains $x$. Then it contains neither $a$, nor $m_a$ (as it is assumed to be adjacent to $x$) nor any vertex in $M$ of the form $m_s$, with $s \in S\setminus \{a,b\}$. Hence there must be an additional vertex $m_b \in M$ that is in $S'$. Then $S' = (S\setminus \{a,b\}) \cup \{x,m_b\}$. Now consider the graph $G-m_a$. As $m_s$ is non-adjacent to every vertex in $S\setminus\{s\}$, for $s \in S\setminus\{b\}$, the vertices $a$ and $b$ have no neighbors in $M\setminus \{m_a,m_b\}$. Furthermore, $x$ is not adjacent to $m_b$. Observe that there is no clique of size $\omega(G)$ in $G-m_a$, contradicting Lemma \ref{lemma:removal}.\\
\indent Hence $x \notin S'$. Then $S' = (S \setminus \{b\}) \cup \{m_b\}$, for some $m_b \in M$ that is neither $m_a$ (as $m_a$ is adjacent to $a$), nor $m_s$, for some $s \in S\setminus\{a,b\}$ (as $m_s$ is adjacent to $s$). Hence $\omega(G) = \alpha(G) \geq 4$. It also follows that $N_M(v) = \{m_v\}$, for $v \in \{a,b\}$. Together with (\ref{equation:degreebound}) this yields $\text{deg}_M(s) = 1$, for all $s \in S$. This in turn implies that no vertex in $S$ can be in a maximum size clique in $G-m_a$. A maximum size clique in $G-m_a$ also cannot contain $x$, as $x$ has a non-neighbor in $M$ that is not $m_a$, by assumption. Therefore $\omega(G-m_a) < \omega(G)$, contradicting Lemma \ref{lemma:removal}.\\

\noindent \textit{$1.2.2$ $m_a$ is not adjacent to $b$, and not adjacent to $x$}\\\\
\noindent Take a vertex of the form $m_s \in M$, for some $s \in S\setminus\{a,b\}$. Either $m_sx \in E$, in which case $\{a,b,x,s,m_s,m_a\}$ induces $H_9$ from Figure \ref{allgraphs}, contradicting the fact that $G$ was supposed to be $\FF$-free. Or $m_sx \notin E$, in which case $\{a,b,x,s,m_s,m_a\}$ induces $P_6$ (which is $H_6$ in Figure \ref{allgraphs}).\\

\noindent \textit{$1.2.3$ $m_a$ is adjacent to $b$, but not adjacent to $x$}\\\\
\noindent Take a vertex of the form $m_s \in M$, for some $s \in S\setminus\{a,b\}$. Either $m_sx \in E$, in which case $\{a,b,x,s,m_s,m_a\}$ induces $H_{11}$. Or $m_sx \notin E$, in which case $\{a,b,x,s,m_s,m_a\}$ induces $H_7$.\\

\noindent \textit{$1.3$ $|N_S(x)| = 1$}\\\\
\noindent Let $N_S(x) = \{a\}$ and let $s \in S\setminus \{a\}$. A maximum size stable set in $G-s$ either contains $a$ or $x$, but not both. Hence, it contains exactly one vertex $m_s$ from $M$. Then $m_s$ is non-adjacent to every vertex in $S\setminus\{a,s\}$. We argue that $m_s$ is adjacent to $s$. Assume that $m_ss \notin E$. Since $S \cup \{m_s\}$ is not a stable set, we must have that $m_sa \in E$. Also, $m_sx\in E$ (otherwise $(S\setminus\{a\}) \cup \{m_s,x\}$ is a stable set of size $\alpha(G)+1$). Observe that there is no stable set of size $\alpha(G)$ in $G-s$, contradicting Lemma \ref{lemma:removal}. Hence $m_s$ is adjacent to $s$. The argument also shows that $m_s \neq m_{s'}$, if $s$ and $s'$ are distinct vertices in $S\setminus\{a\}$, implying that $\omega(G) \geq \alpha(G)-1 \geq 3$. We consider the following possibilities.\\

\noindent \textit{$1.3.1$ $N_M(a) = \emptyset$ and $N_M(x) = \emptyset$}\\\\
\noindent Let $s_1,s_2$ be two distinct vertices in $S \setminus \{a\}$ (they exist, as $\alpha(G) \geq 4$). Then $\{s_1,s_2,a,x,m_{s_1},m_{s_2}\}$ induces $H_3 (= P_4 + K_2)$. \\

\noindent \textit{$1.3.2$ $N_M(a) = \emptyset$ and $N_M(x) \neq \emptyset$}\\\\
\noindent Let $m_x$ be a neighbor of $x$ in $M$. All vertices in $M$ have a neighbor in $S$, otherwise there is a stable set of size $\alpha(G)+1$. As $a$ is non-adjacent to every vertex in $M$, we know that $m_x$ has a neighbor $s_1 \in S\setminus\{a\}$. Let $m_x'$ be a non-neighbor of $x$ in $M$. Assume that the neighbor $s_2$ of $m_x'$ in $S\setminus\{a\}$ is not $s_1$. Then $\{s_1,s_2,a,x,m_x,m_x'\}$ induces $H_4$.\\
\indent Suppose now that $s_2 = s_1$. A maximum size clique in $G-m_x$ must contain $x$. Indeed, if it does not, then it must be of the form $(M\setminus \{m_x\})\cup\{s\}$, for some $s \in S\setminus\{a\}$. But $\omega(G) \geq 3$ and $\text{deg}_{M\setminus\{m_x\}}(s) \leq 1$, for all $s \in S\setminus\{a\}$. So a maximum size clique in $G-m_x$ contains $x$, and then it necessarily does not contain $m_x'$. Then it must contain a vertex from $S$, namely $a$ (as $x$ is adjacent only to $a$ in $S$). But $a$ has no neighbors in $M$ by assumption, a contradiction.\\

\noindent \textit{$1.3.3$ $N_M(a) \neq \emptyset$ and $N_M(x) = \emptyset$}\\\\
\noindent This case is reduced to case $1.3.2$ by interchanging the role of $a$ and $x$.\\

\noindent \textit{$1.3.4a$ $N_M(a) \cap N_M(x) \neq \emptyset$}\\\\
\noindent Let $m$ be a common neighbor of $a$ and $x$ in $M$. Observe that $m \neq m_s$, for $s \in S\setminus\{a\}$ (otherwise $(S\setminus\{s\})\cup\{m_s\}$ is not a stable set). Hence $\omega(G) = \alpha(G) \geq 4$. Then a maximum size clique in $G-m$ uses no vertex of $S\setminus \{a\}$ (as $\text{deg}_M(s) \leq 2$, for all $s \in S\setminus\{a\}$). As both $a$ and $x$ have a non-neighbor in $M$, a maximum size clique in $G-m$ is of the form $(M\setminus \{p,m\}) \cup \{a,x\}$. Here, $p$ is a common non-neighbor of $a$ and $x$ in $M$ and it is the only non-neighbor of $a$, and of $x$. But then $(M\setminus \{p\}) \cup \{a,x\}$ is a clique of size $\omega(G)+1$ in $G$, a contradiction.\\

\noindent \textit{$1.3.4b$ $N_M(a) \neq \emptyset$ and $N_M(x) \neq \emptyset$ but $N_M(a) \cap N_M(x) = \emptyset$}\\\\
\noindent Let $m_a$ be a neighbor of $a$ in $M$ and let $m_x$ be a neighbor of $x$ in $M$. Assume that $\omega(G) = 3$. Then there are $s,s' \in S\setminus\{a\}$ such that $m_s = m_a$ and $m_{s'} = m_x$. By the assumption that $a$ and $x$ have no common neighbor, $\{s,s',a,x,m_a,m_x\}$ induces $H_9$. \\
\indent Hence $\omega(G) \geq 4$. Suppose a maximum size clique in $G-m_a$ contains a vertex from $S\setminus \{a\}$. Since $\text{deg}_M(s) \leq 2$, for all $s \in S\setminus\{a\}$, it must also contain $a$ or $x$ (here we use that $\omega(G) \geq 4$). But both $a$ and $x$ have no other neighbors in $S\setminus\{a\}$. A maximum size clique in $G-m_a$ also cannot contain both $a$ and $x$, as they do not have common neighbors in $M$. The set $(M\setminus \{m_a\}) \cup \{a\}$ is no clique as $m_xa \notin E$. Hence a maximum size clique in $M-m_a$ is of the form $(M\setminus \{m_a\}) \cup \{x\}$, implying that $m_a$ is the only non-neighbor of $x$ in $M$ and also that $N_M(a) = \{m_a\}$. The same arguments as before show that a maximum size clique in $G-m_x$ neither contains both $a$ and $x$, nor a vertex from $S\setminus \{a\}$. The set $(M\setminus \{m_x\})\cup \{a\}$ is not a clique, as $N_M(a) = \{m_a\}$. But also $(M\setminus m_x)\cup \{x\}$ is not a clique, as $m_ax \notin E$. Hence $\omega(G-m_x) < \omega(G)$, a contradiction to Lemma \ref{lemma:removal}.\\

\noindent \textit{Case $2$}\\

\noindent Let $S$ be a maximum size stable set and $M$ be a maximum size clique. By assumption $S \cap M \neq \emptyset$, hence $|S \cap M| = 1$. Let $x \in S \cap M$. Since $|V(G)| -1 = \alpha(G) + \omega(G)$ there are exactly two vertices in $V(G)\setminus (S \cup M)$, say $y$, $z$. As we are in case $2$, $(S\setminus\{x\})\cup\{y,z\}$ cannot contain a stable set of size $\alpha(G)$. Hence each of $y,z$ has a neighbor in $S\setminus\{x\}$. We consider two cases.  \\

\noindent \textit{$2.1$ Both $y,z$ have exactly one neighbor in $S\setminus\{x\}$, which is a common neighbor.}\\\\
\noindent Let $a \in S\setminus\{x\}$ be the common neighbor of $x$ and $y$. Note that in this case $y$ is adjacent to $z$, for otherwise $(S\setminus\{a,x\})\cup\{y,z\}$ is a stable set of size $\alpha(G)$ disjoint from $M$, which is impossible. Since $|S| \geq 4$, there is a vertex $s \in S\setminus\{x\}$ that is distinct from $a$. Now a stable set in $G - s$ can contain at most one vertex of $M$, and one of $\{a,y,z\}$. Thus $\alpha(G-s) \leq  \alpha(G)-3 + 1 + 1 = \alpha(G) - 1$, a contradiction to Lemma \ref{lemma:removal}. \\

\noindent \textit{$2.2$ There exist distinct vertices $a,b \in S\setminus\{x\}$ such that $a$ is adjacent to $y$ and $b$ is adjacent to $z$.}\\\\
\noindent Since $|S| \geq 4$, there is a vertex $s \in S\setminus\{x\}$ that is distinct from $a,b$. Now a stable set in $G - s$ can contain at most one vertex of $M$, and at most one of $a,y$ and at most one of $b,z$. Thus $\alpha(G-s) \leq  \alpha(G)-4 + 1 + 1 + 1 = \alpha(G) - 1$, contradicting Lemma \ref{lemma:removal}. 
\end{proof}

\begin{lemma}\label{lemma:bound}
If $G \in \mathrm{Forb}$, then $|V(G)| \leq 7$.
\end{lemma}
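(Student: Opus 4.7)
My plan is to derive this immediately as a consequence of Lemma \ref{lemma:max2} together with Corollary \ref{corollary:n-1}. The key observation is that these two results together pin down $\alpha(G) + \omega(G)$ from both sides, leaving no room if $|V(G)|$ is too large.

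First I would argue that for any $G \in \Forb$ we have $\alpha(G), \omega(G) \geq 2$: if $\omega(G) = 1$ then $G$ is edgeless and trivially sum-perfect, and if $\alpha(G) = 1$ then $G$ is complete and again sum-perfect, either way contradicting $G \in \Forb$. Combined with Lemma \ref{lemma:max2} this gives $2 \leq \alpha(G), \omega(G) \leq 3$, hence $\alpha(G) + \omega(G) \leq 6$.

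Next I would apply Corollary \ref{corollary:n-1}, which says $\alpha(G) + \omega(G) = |V(G)| - 1$. Putting these together yields $|V(G)| - 1 \leq 6$, that is $|V(G)| \leq 7$, as required. In contrapositive form: assuming $|V(G)| \geq 8$ for contradiction, Corollary \ref{corollary:n-1} forces $\alpha(G) + \omega(G) \geq 7$, while Lemma \ref{lemma:max2} forces $\alpha(G) + \omega(G) \leq 6$, which is impossible.

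There is essentially no obstacle here; all the real work has been done in Lemma \ref{lemma:max2}, whose proof already carried out the lengthy case analysis (cases $1.1$ through $2.2$) needed to rule out large $\alpha$ or $\omega$. The present lemma is a short bookkeeping corollary that simply combines that cardinality bound with the equation $\alpha + \omega = |V| - 1$.
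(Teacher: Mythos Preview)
Your proof is correct and essentially identical to the paper's: the paper also just combines Corollary~\ref{corollary:n-1} with Lemma~\ref{lemma:max2} to get $|V(G)| = \alpha(G) + \omega(G) + 1 \leq 3 + 3 + 1 = 7$. Your extra observation that $\alpha(G), \omega(G) \geq 2$ is true but unnecessary, since the upper bound $\alpha(G) + \omega(G) \leq 6$ already follows directly from Lemma~\ref{lemma:max2}.
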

\begin{proof}
By Corollary \ref{corollary:n-1} and Lemma \ref{lemma:max2} we have $|V(G)| = \alpha(G) + \omega(G) + 1 \leq 3 + 3 + 1 = 7$.
\end{proof}

\begin{proof}[Proof of Theorem \ref{27th}]
We have to prove that $\Forb = \FF$, or equivalently, that $\Forb_n = \FF_n$, for all $n \geq 1$. For $n \leq 7$, this is the content of Lemmas \ref{lemma:6} and \ref{lemma:7}. By Lemma \ref{lemma:bound}, $\Forb_n$ is empty for $n \geq 8$. Hence, we are done. 
\end{proof}

\section{Future directions}\label{section:add}

All but three graphs~$H \in \mathcal{F}$ have the property that~$H$ or~$\overline{H}$ is bipartite, has six vertices and has a perfect matching.  It seems that the three graphs in~$\FF$ that do not satisfy this property (i.e., the 5-cycle~$H_1=C_5$ and the complementary seven-vertex graphs~$H_{26}$ and~$H_{27}$) are not the most important excluded induced subgraphs for obtaining a high lower bound on~$\alpha+\omega$. Write~$\mathcal{B}:=  \mathcal{F} \setminus \{H_1, H_{26},H_{27}\} =\{H_2,\ldots H_{25}\}$. We believe that the following, which was verified by computer to be true for all graphs with at most~$10$ vertices, holds:
\begin{conjecture}
Every~$\mathcal{B}$-free graph~$G$ satisfies $\alpha(G) + \omega(G) \geq |V(G)| - 1$. 
\end{conjecture}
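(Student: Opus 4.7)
The strategy is to mirror the structure of the proof of Theorem~\ref{27th}, with the weaker inequality $\alpha+\omega\geq |V|-1$ in place of $\alpha+\omega\geq |V|$ and the smaller family $\mathcal{B}$ in place of $\mathcal{F}$. Proceeding by induction on $|V(G)|$, a minimum counterexample $G$ would be a $\mathcal{B}$-free graph with $\alpha(G)+\omega(G)\leq |V(G)|-2$ such that every proper induced subgraph $H$ satisfies $\alpha(H)+\omega(H)\geq |V(H)|-1$; the latter property is forced by induction, since any $\mathcal{B}$-containing induced subgraph of $H$ would also witness a copy of $\mathcal{B}$ inside $G$. The conjecture amounts to showing that no such minimum counterexample exists.

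The first step is the exact analogue of Lemma~\ref{lemma:removal} and Corollary~\ref{corollary:n-1}. For each $v\in V(G)$,
\[
\alpha(G-v)+\omega(G-v)\geq |V(G)|-2\geq \alpha(G)+\omega(G),
\]
and monotonicity of $\alpha$ and $\omega$ under vertex deletion forces $\alpha(G-v)=\alpha(G)$, $\omega(G-v)=\omega(G)$, and $\alpha(G)+\omega(G)=|V(G)|-2$ exactly. The reasoning is identical in spirit to its counterparts in the excerpt.

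The main technical step is an analogue of Lemma~\ref{lemma:max2} proving $\max\{\alpha(G),\omega(G)\}\leq K$ for some explicit $K$. Together with the previous step this yields $|V(G)|=\alpha(G)+\omega(G)+2\leq 2K+2$. The target value $K=4$ would give the bound $|V(G)|\leq 10$, matching exactly the range that the authors report having checked by computer, so the conjecture would follow immediately after that bound is established.

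I expect the main obstacle to be precisely this bound on $\max\{\alpha,\omega\}$. The case analysis in the proof of Lemma~\ref{lemma:max2} still has access to the six-vertex graphs $H_3,H_4,H_6,H_7,H_9,H_{11}\in\mathcal{B}$ that were used to close individual branches, so several subcases should carry over more or less directly. What fails is the appeal to $\mathcal{F}$-freeness for the three exceptional graphs $H_1=C_5,\,H_{26},\,H_{27}$: induced copies of these are now permitted, and the proofs of Lemmas~\ref{lemma:perfect}, \ref{lemma:outercase} and \ref{lemma:7}, as well as several subcases of Lemma~\ref{lemma:max2}, implicitly relied on ruling them out. Each such point must be re-examined, either producing a fresh contradiction from $\mathcal{B}$-freeness alone or by extending the case tree. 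Because $C_5$, $H_{26}$ and $H_{27}$ each satisfy $\alpha+\omega = |V|-1$ with equality, one should expect them never to embed into a genuine minimum counterexample, but making this intuition rigorous inside the enlarged case distinction is the bulk of the work.
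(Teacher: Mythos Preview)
This statement is presented in the paper as a \emph{conjecture}, not a theorem: the authors give no proof and only report computer verification for graphs with at most $10$ vertices. Your proposal is therefore not competing against any argument in the paper; it is a research plan for attacking an open problem, and you yourself identify the bound $\max\{\alpha(G),\omega(G)\}\leq K$ as the step that is genuinely missing.

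Your diagnosis of where the difficulty lies is, however, somewhat misdirected. You emphasise the loss of $C_5$, $H_{26}$, $H_{27}$ from the forbidden family, but in fact none of these three graphs is ever invoked in the proof of Lemma~\ref{lemma:max2}: every induced-subgraph contradiction there exhibits one of $H_3,H_4,H_6,H_7,H_9,H_{11}$, all of which belong to $\mathcal{B}$. (Lemmas~\ref{lemma:perfect}, \ref{lemma:outercase}, \ref{lemma:7} do use the three exceptional graphs, but your plan does not actually need analogues of those lemmas --- only an analogue of Lemma~\ref{lemma:max2} together with the computer check.) The real structural obstacle is the shift from $\alpha(G)+\omega(G)=|V(G)|-1$ to $\alpha(G)+\omega(G)=|V(G)|-2$. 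In Case~1 of Lemma~\ref{lemma:max2} (a disjoint maximum clique $M$ and stable set $S$) this leaves not a single extra vertex $x$ outside $M\cup S$ but \emph{two}; in Case~2 ($|M\cap S|=1$) there are three outside vertices instead of two. The entire case tree of Lemma~\ref{lemma:max2} is organised around the adjacencies of that single vertex $x$ to $S$, so adding a second such vertex does not merely append branches --- it reshapes every subcase and all the counting. Your claim that ``several subcases should carry over more or less directly'' therefore understates the work required; whether the enlarged analysis can be pushed through for $K=4$ (or any $K$) is exactly the open content of the conjecture.
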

\noindent Another problem that naturally arises from the main theorem is the following. For a positive integer $c$, let $\mathcal{H}_c$ denote the class of graphs $G$ such that every induced subgraph $H$ of $G$ satisfies $\alpha(H) + \omega(H) \geq |V(H)|-c$. Is the list of forbidden induced subgraphs for $\mathcal{H}_c$ finite?  (Note that $\mathcal{H}_1$ is the class of sum-perfect graphs.) For $c=1$, the list of forbidden induced subgraphs with at most~$8$ vertices already contains~$>1000$ members, as was found by computer.

From the definition of sum-perfect graphs and  Lov\'{a}sz' characterization of perfect graphs, it is easy to see that sum-perfect graphs are perfect. A graph $G$ is \emph{strongly perfect} if every induced subgraph $H$ of $G$ contains a stable set that intersects all the maximal (with respect to inclusion) cliques of $H$ (see \cite{BD}). Are sum-perfect graphs strongly perfect?

The class of sum-perfect graphs gives rise to interesting algorithmic questions. The problems STABLE SET, MAXCLIQUE, COLORING, CLIQUE COVER are all polynomial for sum-perfect graphs because sum-perfect graphs are weakly chordal, and fast algorithms are known for the latter (see \cite{HHM}, \cite{HSS}). Are there faster algorithms by exploiting the special structure of sum-perfect graphs? 

A last algorithmic question is the problem of recognizing sum-perfect graphs. The main theorem of this paper implies that there is a $O(n^7)$ algorithm to recognize sum-perfect graphs with~$n$ vertices: just test whether any of the~$27$ graphs from~$\FF$ appears as an induced subgraph. Since the largest of these graphs has $7$ vertices, we get an algorithm whose running time is $O(n^7)$. Is there a faster algorithm to recognize sum-perfect graphs? These questions are material for further research.\\

\noindent \textit{Acknowledgements.} The authors would like to thank Bart Sevenster for helpful discussions.

\selectlanguage{english}

\end{document}